\documentclass[10pt,leqno]{amsart}
\textheight 8in
\textwidth 5.5 in
\voffset -0.3in
\hoffset -0.6in
\usepackage{amsmath}
\usepackage{amsfonts}
\usepackage{amssymb}
\usepackage{graphicx}
\usepackage{color}
\usepackage{hyperref}
\parindent 6pt
\parskip 4pt

  \overfullrule=5pt

\parskip 4pt

\usepackage{xcolor}

\newtheorem{theorem}{Theorem}[section]
\newtheorem*{theorem*}{Theorem}
\newtheorem{lemma}{Lemma}[section]
\newtheorem{corollary}[theorem]{Corollary}
\newtheorem{proposition}{Proposition}[section]




\def\Ha{H_{\alpha}}

\def\a{\alpha}
\def\l{\lambda}

\def\p{\partial}

\def\R{\mathbb{R}}

\def\vol{\operatorname{vol}}


\numberwithin{equation}{section}

\begin{document}

\title[Robin heat kernel]{The Robin heat kernel and its expansion via Robin eigenfunctions}

\author{Yifeng Meng} \thanks{}
\address{School of Mathematical Sciences, Fudan University, Shanghai, 200433, China}
\email{23110180026@m.fudan.edu.cn}

\author{Kui Wang} \thanks{}
\address{School of Mathematical Sciences, Soochow University, Suzhou, 215006, China}
\email{kuiwang@suda.edu.cn}

\subjclass[2020]{35P15, 53C21}

\keywords{Robin heat kernel, Robin eigenvalue, Expansion formula}

\begin{abstract}
 We prove the existence and uniqueness of the Robin heat kernel 
on compact Riemannian manifolds with smooth boundary for Robin parameter $\a\in\R$, expressed as a spectral expansion in terms of Robin eigenvalues and eigenfunctions. 
For the non-negative parameter regime ($\a\ge 0$), we present a direct proof based on trace Sobolev inequalities and eigenfunction estimates. The case of negative parameters ($\a<0$) requires novel analytical techniques to handle $L^\infty$  estimates of Robin eigenfunctions, addressing challenges not present in the non-negative case. Our result extends the  the classical Dirichlet and Neumann cases  to the less-studied negative parameter regime.
\end{abstract}

\maketitle
\section{Introduction}
    Let $M^m$ be an $m$-dimensional compact Riemannian manifold with smooth boundary, and we consider the heat equation 
    \begin{align}\label{1.1}
        u_t(x,t)-\Delta u(x,t)=0, \qquad (x,t) \in M\times (0, T),
    \end{align}
    with the Robin boundary condition
    \begin{align}\label{1.2}
         \frac{\p }{\p \nu} u + \alpha u = 0,  \qquad (x,t) \in \p M\times (0, T), 
    \end{align}
      where $\nu$ denotes the  unit outward normal to $\partial M$, and $\a\in R$ is the Robin parameter. 
   It is well known that for the Neumann boundary ($\a=0$) and the Dirichlet boundary ($\a=+\infty$), the solution of \eqref{1.1} with initial data $u(x,0)=u_0(x)$ can be expressed as
    \begin{equation*}
        u(x,t) = \int_{M} \Ha (x,y,t) u_0(y) dy.
    \end{equation*} 
   where $H_0(x,y,t)$  and $H_{+\infty}(x,y,t)$ represent Neumann and Dirichlet heat kernels, respectively (see \cite{berger1971spectre},\cite[Chapter 10]{li2012geometric}). For further discussion on some space-time boundary conditions, we refer to \cite{jones1972existence}.
  Heat kernels play an important role in the study of partial differential equations and geometric analysis, as evidenced by  \cite{CY81, MC92}  and the comprehensive treatments \cite{Dan00-2, grigor2009heat, Zhang11}. Recent progress on  the heat kernel theory can be found in \cite{ DY23, DY25, LW22, LW17,  zeng2025spectrum}.

    To state our  main result, we first introduce  the Robin eigenvalue problem: let $\l_{i, \a}$ and $\phi_{i,\alpha}(x)$ denote the Robin eigenvalues and eigenfunctions of Laplacian on $M$ with  parameter $\a\in \R$, defined by 
     \begin{align}\label{1.3}
        \begin{cases}
            -\Delta  \phi_{i,\alpha}  = \lambda_{i,\alpha} \phi_{i,\alpha} ,&\quad x \in M ,\\
            \frac{\p }{\p \nu} \phi_{i,\alpha} + \alpha \phi_{i,\alpha} = 0 , & \quad x \in \p M,
        \end{cases}
    \end{align}
where the eigenvalues satisfy
  \begin{equation*}
        \lambda_{1,\alpha} < \lambda_{2,\alpha} \leq \lambda_{3,\alpha} \leq \cdots \to +\infty,
    \end{equation*}
For details, see \cite[Chapter 4]{henrot2017shape}.

Denote by $\Delta_\a$ be the Laplacian with Robin boundary condition \eqref{1.2}. When $\a\ge 0$, the operator $\Delta_\a$  is non-negative, then Theorem 2.1.4 of \cite{Da89} implies the Robin heat kernel has the  following expansion formula
$$
  \Ha(x,y,t) = \sum_{i=1}^{\infty} e^{ - \lambda_{i,\alpha} t } \phi_{i,\alpha}(x) \phi_{i,\alpha}(y).
$$
The expansion formula \eqref{1.4} is well-established for 
Dirichlet and Neumann heat kernels, as well as for heat kernels on closed manifolds, see \cite{berger1971spectre, Da89, li2012geometric}. However, the case $\a<0$ has been less studied. In this paper, we address this remaining case.
    \begin{theorem}\label{thm1}
Let $M$ be a compact Riemannian manifold with smooth boundary, $\a\in \R$,  $\l_{i, \a}$ be the Robin eigenvaules defined by \eqref{1.3},   and $\phi_{i, \a}$ be the corresponding normalized eigenfunctions.  Denote with   
    \begin{align}\label{1.4}
         \Ha(x,y,t) = \sum_{i=1}^{\infty} e^{ - \lambda_{i,\alpha} t } \phi_{i,\alpha}(x) \phi_{i,\alpha}(y).
    \end{align}
Then, $\Ha(x,y,t)$ is well defined on $M \times M \times (0, \infty)$, and is the unique kernel such that: for any  $u_0(x) \in L^2(M)$, the solution of equation \eqref{1.1} with the Robin boundary condition and initial condition $u(x,0)=u_0$ is  given by
    \begin{align*}
        u(x,t) = \int_{M} \Ha (x,y,t) u_0(y) dy.
    \end{align*}
    \end{theorem}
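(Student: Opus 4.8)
The plan is to proceed in three stages: establish polynomial (in $\l_{i,\a}$) bounds for the Robin eigenfunctions and their derivatives, which yields convergence and regularity of the series \eqref{1.4}; then check that $u(x,t)=\int_M \Ha(x,y,t)u_0(y)\,dy$ solves \eqref{1.1}--\eqref{1.2} with initial datum $u_0$; and finally prove uniqueness of the kernel.

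\emph{Stage 1: spectral setup and eigenfunction estimates.} I would first record that the Robin form $Q_\a(\phi)=\int_M|\nabla\phi|^2+\a\int_{\p M}\phi^2$ is closed, symmetric and bounded below on $H^1(M)$: for $\a<0$ the trace Sobolev inequality $\|\phi\|_{L^2(\p M)}^2\le\e\|\nabla\phi\|_{L^2(M)}^2+C_\e\|\phi\|_{L^2(M)}^2$ gives $Q_\a(\phi)\ge\tfrac12\|\nabla\phi\|_{L^2(M)}^2-C\|\phi\|_{L^2(M)}^2$, while $Q_\a\ge0$ for $\a\ge0$. Since $H^1(M)$ embeds compactly in $L^2(M)$, the associated operator $\Delta_\a$ is self-adjoint with compact resolvent, $\{\phi_{i,\a}\}$ is an $L^2$-orthonormal basis, and only finitely many $\l_{i,\a}$ are negative; moreover min-max together with the same inequality and Weyl's law for the Neumann Laplacian yields $\l_{i,\a}\ge\tfrac12\l_{i,0}-C\gtrsim i^{2/m}$ for large $i$. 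The crucial analytic input is the pointwise bound $\|\phi_{i,\a}\|_{L^\infty(M)}\le C(1+|\l_{i,\a}|)^{\k}$ with $\k=\k(m)$, which, via interior and Robin-boundary elliptic estimates applied to $-\D\phi_{i,\a}=\l_{i,\a}\phi_{i,\a}$, bootstraps to $\|\phi_{i,\a}\|_{C^k(M)}\le C_k(1+|\l_{i,\a}|)^{N_k}$ for every $k\ge0$ (constants depending on $M,\a,k$). For $\a\ge0$ the $L^\infty$ bound follows from Moser iteration, the favorable sign of the boundary term letting the Robin energy control $\||\phi|^{p/2}\|_{L^2}$; for $\a<0$ that mechanism is unavailable, and the iteration must instead absorb $|\a|\int_{\p M}|\phi|^p$ at every step through a trace Sobolev inequality with constants independent of the exponent — carrying this out is where I expect the main difficulty to lie. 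Granting these bounds, Cauchy--Schwarz and the Weyl lower bound show that \eqref{1.4}, its termwise $x$-derivatives up to order two (using the eigenvalue equation to rewrite $\D_x$ as multiplication by $-\l_{i,\a}$) and its $\p_t$-derivative all converge uniformly on $M\times M\times[\delta,\infty)$ for every $\delta>0$, since the finitely many terms with $\l_{i,\a}<0$ are harmless for fixed $t$. Hence $\Ha$ is well defined and smooth on $M\times M\times(0,\infty)$, symmetric in $x$ and $y$, and termwise differentiation gives $\heat\Ha=0$ in $x$ together with the Robin boundary condition in $x$, for each fixed $y$.

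\emph{Stage 2: the solution formula.} Given $u_0\in L^2(M)$, expand $u_0=\sum_i a_i\phi_{i,\a}$ with $a_i=\int_M u_0\phi_{i,\a}$ and $\sum_i a_i^2=\|u_0\|_{L^2}^2$. The uniform convergence from Stage 1 permits interchanging sum and integral, so $u(x,t)=\int_M\Ha(x,y,t)u_0(y)\,dy=\sum_i a_i e^{-\l_{i,\a}t}\phi_{i,\a}(x)$. For $t\ge\delta$, Cauchy--Schwarz against $\sum_i a_i^2<\infty$ together with the eigenfunction bounds make this series and its $\p_t$- and up-to-second-order $x$-derivatives converge uniformly, so $u$ is a classical solution of \eqref{1.1} for $t>0$ satisfying \eqref{1.2}. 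Finally $\|u(\cdot,t)-u_0\|_{L^2}^2=\sum_i a_i^2\lf(e^{-\l_{i,\a}t}-1\ri)^2\to0$ as $t\to0^+$ by dominated convergence (the finitely many terms with $\l_{i,\a}<0$ stay bounded as $t\to0^+$, and $|e^{-\l_{i,\a}t}-1|\le1$ when $\l_{i,\a}\ge0$), i.e.\ $u(\cdot,t)\to u_0$ in $L^2(M)$.

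\emph{Stage 3: uniqueness.} Solutions of \eqref{1.1}--\eqref{1.2} with zero initial datum vanish: if $w$ is such a solution, then using the Robin condition in the integration by parts, $\frac{d}{dt}\|w(\cdot,t)\|_{L^2}^2=-2Q_\a(w(\cdot,t))\le 2C\|w(\cdot,t)\|_{L^2}^2$, and Gronwall's inequality forces $w\equiv0$. Consequently, if $K(x,y,t)$ is any kernel for which $\int_M K(x,y,t)u_0(y)\,dy$ is the solution of \eqref{1.1}--\eqref{1.2} with datum $u_0$, then $\int_M\big(K(x,y,t)-\Ha(x,y,t)\big)u_0(y)\,dy=0$ for a.e.\ $x$, every $t>0$ and every $u_0\in L^2(M)$; letting $u_0$ range over $L^2(M)$ forces $K(\cdot,\cdot,t)=\Ha(\cdot,\cdot,t)$ almost everywhere on $M\times M$ for each $t>0$, and everywhere if $K$ is required continuous. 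This completes the proof.
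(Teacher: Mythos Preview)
Your three-stage outline is sound and lands the theorem, but the key technical step for $\a<0$ and the uniqueness argument both differ from the paper's choices.

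For $\a<0$ you propose to run Moser iteration directly on $|\phi_{i,\a}|$, absorbing the bad boundary term $|\a|\int_{\p M}|\phi_{i,\a}|^k$ at each step via the trace inequality; this can be made to work (the extra factor of $k$ coming from the absorption still yields a polynomial-in-$\l_{i,\a}$ bound after the infinite product), but as you yourself flag, the bookkeeping is delicate. The paper sidesteps this entirely with a substitution: since $\phi_{1,\a}>0$ on $\ov M$ (Proposition~\ref{prop:posi}), the quotient $w_i:=\phi_{i,\a}/\phi_{1,\a}$ satisfies a drift equation $\Delta w_i+2\langle\nabla\log\phi_{1,\a},\nabla w_i\rangle+(\l_{i,\a}-\l_{1,\a})w_i=0$ with a \emph{Neumann} boundary condition, so the boundary term disappears from the iteration and only a bounded first-order drift (controlled by $C_8=\sup|\nabla\log\phi_{1,\a}|$) has to be absorbed. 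This gives the clean bound of Lemma~\ref{lm3.3} with no $\a$-dependent trace constants entering the iteration. Your route is more direct but heavier; the paper's is slicker and makes the dependence on $\a$ transparent through $\phi_{1,\a}$.

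For uniqueness you use the energy identity $\frac{d}{dt}\|w\|_2^2=-2Q_\a(w)\le 2C\|w\|_2^2$ and Gronwall, which is short and requires only the form lower bound you already recorded. The paper instead proves a parabolic maximum principle for the Robin problem (Theorem~\ref{thm:3.1}), comparing with $\e\,e^{-\l_{1,\b}t}\phi_{1,\b}$ for a suitably negative $\b$, and deduces uniqueness and nonnegativity of the kernel as a byproduct. Your argument is more elementary; the paper's yields the additional information $\Ha\ge0$.
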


The proof of our main theorem  combines techniques from spectral theory, elliptic regularity, and geometric analysis. For  $\a>0$, we provide a direct proof using a trace Sobolev inequality and iteration methods. The case $\a<0$ presents additional challenges due to the negativity of the principal eigenvalue; we overcome this through a careful decomposition of the first eigenfunction. The uniqueness follows from the  maximum  principle for the Robin heat equation (Theorem \ref{thm:3.1}), which extends classical results for Dirichlet and Neumann boundary conditions.

This paper is organized as follows:  Section \ref{Sect.2} presents preliminary results on Robin eigenvalues, maximum principles, and trace inequalities.
   Section \ref{Sect.3}  contains the proof of our main theorem, with separate treatments for positive and negative Robin parameters.

\section{Preliminaries}\label{Sect.2}
This section establishes the foundational results necessary for our analysis of the Robin heat kernel. We adopt the following notation throughout: for any function $f\in L^q(M)$, its $L^q$ norm is denoted by
    $$
    \Vert f\Vert_q:=(\int_M |f(x)|^q\,dx)^{1/q}.
    $$
\subsection{Robin Eigenvalue Problem.} 
Let $M$ be a compact $m$-dimensional Riemannian manifold with smooth boundary, $\l_{i, \a}$ ($i=1,2, \cdots$) be the Robin eigenvalues, and  $\phi_{i, \a}$ be  normalized eigenfunctions such that $\Vert \phi_{i,\alpha}\Vert_2 =1$.
It is well known that the eigenvalue problem \eqref{1.3} is equivalent to the following variation problem 
    \begin{equation}{\label{2.1}}
        \lambda_{i,\alpha}(M) = \inf_{\substack{H \subset H^1(M),\\ \dim H=i} } \sup_{0 \neq u \in H} \frac{\int_{M} |\nabla u|^2 dx + \alpha \int_{\partial M} u^2 dS }{ \int_{M} u^2 dx  },
    \end{equation}
    and particularly
    \begin{equation}{\label{2.2}}
        \lambda_{1,\alpha}(M) = \inf_{0 \neq u \in H^1(M)}  \frac{\int_{M} |\nabla u|^2 \, dx + \alpha\int_{\partial M}  u^2 \, dS }{ \int_{M} u^2 \, dx},
    \end{equation}
    where $dS$ is the induced measure on $\p M$.
   Moreover, the eigenfunctions $\phi_{i,\alpha}$($i=1,2,\cdots$) form a complete  orthonormal basis for  $L^2(M)$. If $M$ is connected, the Krein-Rutman Theorem guarantees the simplicity of $\l_{1,\a}$ and strict positivity of its eigenfunction (see \cite[Section 4.2]{henrot2017shape}). When $\a<0$, we obtain the following  additional information.
     \begin{proposition}{\label{prop:posi}}
Suppose $\a<0$, and denote by $\phi_{1,\alpha}$ be the  positive normalized eigenfunction  with respect to $\lambda_{1,\alpha}$. Then, $\lambda_{1,\alpha}(M)<0$ and $\inf_{M} \phi_{1,\alpha} >0$.
     \end{proposition}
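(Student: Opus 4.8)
The first claim, $\lambda_{1,\alpha}(M)<0$, I would get directly from the variational characterization \eqref{2.2}. Plug in the test function $u\equiv 1\in H^1(M)$: the numerator becomes $\int_M |\nabla 1|^2\,dx+\alpha\int_{\partial M}1\,dS=\alpha\,|\partial M|$, and the denominator is $|M|>0$. Since $\alpha<0$ and $|\partial M|>0$ (the boundary is nonempty by hypothesis), this yields
$$
\lambda_{1,\alpha}(M)\le \frac{\alpha\,|\partial M|}{|M|}<0.
$$
That is the whole argument for the first part; no real obstacle there.

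For the second claim, $\inf_M\phi_{1,\alpha}>0$, I already know from the Krein--Rutman discussion cited just before the proposition that $\lambda_{1,\alpha}$ is simple and its eigenfunction can be taken strictly positive on the interior, and by the Hopf lemma / strong maximum principle considerations one expects positivity up to the boundary as well; but ``pointwise positive'' on a compact set does not by itself give a positive infimum unless the function is continuous on the \emph{closed} manifold $\overline{M}=M$. So the plan is: (i) invoke elliptic regularity — $\phi_{1,\alpha}$ solves $-\Delta\phi_{1,\alpha}=\lambda_{1,\alpha}\phi_{1,\alpha}$ with the Robin condition, so by standard Schauder/$L^p$ boundary regularity for oblique-derivative problems on a manifold with smooth boundary, $\phi_{1,\alpha}\in C^\infty(\overline M)$, in particular $\phi_{1,\alpha}\in C^0(\overline M)$; (ii) since $M$ is compact, the continuous function $\phi_{1,\alpha}$ attains its minimum at some point $x_0\in \overline M$, and $\phi_{1,\alpha}(x_0)=\inf_M\phi_{1,\alpha}$; (iii) it remains to rule out $\phi_{1,\alpha}(x_0)=0$.

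For step (iii) I distinguish whether $x_0$ is interior or on $\partial M$. If $x_0\in M^\circ$ is an interior minimum with value $0$, then since $\phi_{1,\alpha}\ge 0$ and $-\Delta\phi_{1,\alpha}=\lambda_{1,\alpha}\phi_{1,\alpha}$, rewrite this as $\Delta\phi_{1,\alpha}+\lambda_{1,\alpha}\phi_{1,\alpha}=0$; treating $c(x):=\lambda_{1,\alpha}$ (a bounded coefficient) and noting $\phi_{1,\alpha}\ge0$ attains an interior zero minimum, the strong maximum principle (Hopf) forces $\phi_{1,\alpha}\equiv 0$, contradicting $\|\phi_{1,\alpha}\|_2=1$. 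If instead $x_0\in\partial M$ with $\phi_{1,\alpha}(x_0)=0$: by the strong maximum principle $\phi_{1,\alpha}>0$ in the interior (else we are back to the previous case), so $x_0$ is a boundary minimum with $\phi_{1,\alpha}(x_0)=0<\phi_{1,\alpha}$ nearby; the Hopf boundary-point lemma then gives $\frac{\partial}{\partial\nu}\phi_{1,\alpha}(x_0)<0$. But the Robin condition at $x_0$ reads $\frac{\partial}{\partial\nu}\phi_{1,\alpha}(x_0)=-\alpha\,\phi_{1,\alpha}(x_0)=-\alpha\cdot 0=0$, a contradiction. Hence $\inf_M\phi_{1,\alpha}=\phi_{1,\alpha}(x_0)>0$.

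The only place that needs a little care — and the step I would flag as the main (mild) obstacle — is the applicability of the Hopf lemma on a Riemannian manifold with the coefficient $c=\lambda_{1,\alpha}$, which when $\lambda_{1,\alpha}<0$ makes the zeroth-order coefficient of $\Delta\phi_{1,\alpha}+\lambda_{1,\alpha}\phi_{1,\alpha}=0$ have the ``wrong'' sign; the standard fix is that the Hopf lemma and strong maximum principle hold at a \emph{zero} minimum regardless of the sign of the bounded zeroth-order term (one only needs $c$ bounded, since near a zero of a nonnegative supersolution the zeroth-order term is controlled). Working in boundary normal coordinates reduces everything to the Euclidean statements in Gilbarg--Trudinger, so no genuinely new input is required; alternatively, one can cite the positivity of the first eigenfunction already granted by Krein--Rutman together with $C^0(\overline M)$-regularity and just the boundary Hopf lemma to exclude a boundary zero, which is the shortest route.
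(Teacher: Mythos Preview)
Your proposal is correct and follows essentially the same route as the paper: the variational test function $u\equiv 1$ for $\lambda_{1,\alpha}<0$, and for the infimum the contradiction between the Robin condition $\partial_\nu\phi_{1,\alpha}(x_0)=-\alpha\phi_{1,\alpha}(x_0)=0$ at a boundary zero and the Hopf lemma. The paper's version is terser---it takes interior positivity as already granted by Krein--Rutman and jumps straight to the boundary contradiction---whereas you spell out the regularity, the interior case, and the sign issue in the Hopf lemma, but the core argument is the same.
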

    \begin{proof}
        Taking $u=1$ as a trial 
        function in \eqref{2.2} yields
        $$
        \l_{1,\a}(M)\le \frac{\alpha\int_{\partial M}   \, dS }{ \int_{M}  \, dx}<0.
        $$
     Suppose $\inf_M \phi_{1,\alpha}=0$, then there exists a $x_0 \in \p M$ such that $\phi_{1, \a}(x_0)=0$. The Robin boundary condition \eqref{1.2} gives $\p_{\nu} \phi_{1,\alpha}(x_0) = 0$, which contradicts with the Hopf lemma. Hence $\inf_M \phi_{1,\alpha}>0$.
    \end{proof}

\subsection{Maximum Principles and Uniqueness}\label{sect 2.2}
The Robin heat equation satisfies the following maximum principle:
    \begin{theorem}{\label{thm:3.1}}
        Let $M$ be a compact manifold with smooth boundary. Suppose that $u(x,t)\in C^{2,1}(M\times [0,\infty))$ satisfies 
    \begin{align}\label{2.3}
        \begin{cases}
            \left( \p_t  -\Delta \right) u \ge 0 ,&\quad (x,t) \in M\times (0,\infty),\\
            \frac{\p }{\p \nu} u + \alpha u \ge 0 , & \quad (x, t) \in \p M\times (0,+\infty),\\
            u(x,0) \geq 0, &\quad x\in M. 
        \end{cases}
    \end{align}
    Then, $$u(x,t) \geq 0$$ for all $x\in M$ and $t>0$.
    \end{theorem}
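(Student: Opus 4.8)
The plan is to reduce the Robin problem to a Neumann-type problem by an exponential substitution that kills the zeroth-order boundary term, and then run a barrier/perturbation argument of the classical kind. First I would fix an arbitrary finite time horizon $T>0$ and choose a function $\psi\in C^\infty(M)$ with $\frac{\p}{\p\nu}\psi = -\alpha$ on $\p M$; such $\psi$ exists because one can prescribe the normal derivative of a smooth function on a collar neighbourhood of $\p M$ and extend. Writing $u = e^{\psi} v$, a direct computation turns $(\p_t-\Delta)u\ge 0$ into $(\p_t - \Delta)v - 2\langle\nabla\psi,\nabla v\rangle + c\, v \ge 0$ for a smooth bounded function $c = -\Delta\psi - |\nabla\psi|^2$, while the boundary condition becomes $\frac{\p}{\p\nu}v = e^{-\psi}\big(\frac{\p}{\p\nu}u + \alpha u\big)\ge 0$ on $\p M$. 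So it suffices to prove: if $v\in C^{2,1}(M\times[0,T])$ satisfies $(\p_t-\Delta+ b\cdot\nabla + c)v\ge 0$ in the interior, $\frac{\p}{\p\nu}v\ge 0$ on $\p M$, and $v(\cdot,0)\ge 0$, then $v\ge 0$ on $M\times[0,T]$.

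Next I would handle this Neumann-type statement by the standard perturbation trick. Replace $v$ by $w_\eps := e^{-Kt}v + \eps e^{-Kt}$ for constants $K,\eps>0$ with $K > \sup|c|$; then $w_\eps$ satisfies the strict inequality $(\p_t-\Delta+b\cdot\nabla+c)w_\eps \ge \eps e^{-Kt}(K + c) > 0$ in the interior, still has $\frac{\p}{\p\nu}w_\eps\ge 0$ on $\p M$, and has $w_\eps(\cdot,0) = v(\cdot,0)+\eps \ge \eps>0$. Suppose $w_\eps$ becomes negative somewhere; then by compactness of $M\times[0,T]$ there is a first time $t_0\in(0,T]$ and a point $x_0\in M$ with $w_\eps(x_0,t_0)=0$ and $w_\eps\ge 0$ on $M\times[0,t_0]$. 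If $x_0$ is an interior point, then at $(x_0,t_0)$ we have $\nabla w_\eps = 0$, $\Delta w_\eps\ge 0$, and $\p_t w_\eps\le 0$, forcing $(\p_t-\Delta+b\cdot\nabla+c)w_\eps\le c\cdot 0 = 0$, contradicting strict positivity. If $x_0\in\p M$, then at the spatial minimum on the boundary the Hopf lemma gives $\frac{\p}{\p\nu}w_\eps(x_0,t_0) < 0$ (strictly), contradicting $\frac{\p}{\p\nu}w_\eps\ge 0$; alternatively one notes $w_\eps$ attains an interior-in-$M$ minimum along $\p M$ is impossible unless $w_\eps$ is constant, again a contradiction. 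Hence $w_\eps\ge 0$ on $M\times[0,T]$; letting $\eps\to 0$ gives $v\ge 0$, hence $u = e^\psi v\ge 0$ on $M\times[0,T]$, and since $T$ was arbitrary the conclusion holds for all $t>0$.

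The main obstacle — and the reason the exponential substitution is worth doing — is the boundary analysis when the first-touching point lies on $\p M$: with the raw Robin condition $\frac{\p}{\p\nu}u = -\alpha u$ one cannot directly conclude a sign for $\frac{\p}{\p\nu}u$ at a zero of $u$ without already knowing $u\ge 0$ nearby, which is circular. After the substitution the boundary inequality $\frac{\p}{\p\nu}w_\eps\ge 0$ is sign-definite and independent of the value of $w_\eps$, so the Hopf lemma applies cleanly at a boundary minimum. One technical point to be careful about is that the Hopf lemma requires a genuine spatial minimum at $(x_0,t_0)$ with the function not locally constant; this is handled by the strict interior inequality, which rules out $w_\eps$ being constant on any open set, together with the choice of $w_\eps$ making the initial slice strictly positive so that $t_0>0$. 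A second minor point: $\psi$ and hence $c$, $b$ are bounded on the compact $M$, so the choice of $K$ is legitimate. With these observations the argument is routine.
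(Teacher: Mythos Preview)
Your strategy---conjugate $u$ by $e^\psi$ with $\partial_\nu\psi=-\alpha$ to convert the Robin condition into a Neumann one, then run a perturbation/first-touching argument---is sound and genuinely different from the paper's proof. The paper instead builds a barrier from the first Robin eigenfunction with a \emph{more negative} parameter: taking $\beta=\min\{\alpha,0\}-1$, the function $w=e^{-\lambda_{1,\beta}t}\phi_{1,\beta}$ is strictly positive (by Proposition~\ref{prop:posi}) and satisfies $\partial_\nu w+\alpha w=(\alpha-\beta)w>0$ strictly, so $v_\varepsilon=u+\varepsilon w$ has a \emph{strictly positive} Robin boundary quantity. At a first zero of $v_\varepsilon$ the interior case is dispatched by the strong parabolic maximum principle, and at a boundary zero one only needs the elementary inequality $\partial_\nu v_\varepsilon\le 0$---no Hopf lemma. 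Your route trades this spectral barrier for an elementary gauge substitution, at the cost of needing the parabolic Hopf lemma at the boundary; both are legitimate, and the paper's version is arguably cleaner precisely because the strict boundary inequality sidesteps Hopf entirely.

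There is, however, a concrete slip in your perturbation step. With $w_\varepsilon=e^{-Kt}(v+\varepsilon)$ and the \emph{original} operator $L=\partial_t-\Delta+b\cdot\nabla+c$ one actually gets
\[
Lw_\varepsilon=e^{-Kt}Lv-Ke^{-Kt}v+\varepsilon e^{-Kt}(c-K),
\]
which is \emph{not} bounded below by $\varepsilon e^{-Kt}(K+c)$ as you claim: the term $-Ke^{-Kt}v$ has unknown sign, and even at a first zero (where $v=-\varepsilon$) this only yields $Lw_\varepsilon\ge \varepsilon e^{-Kt}c$, which need not be positive since $c$ can be negative. The fix is standard: regard the substitution $\tilde v=e^{-Kt}v$ as shifting the zeroth-order coefficient, so that $(\partial_t-\Delta+b\cdot\nabla+(c+K))\tilde v=e^{-Kt}Lv\ge0$ with $c+K>0$, and then perturb by a \emph{constant}, setting $w_\varepsilon=\tilde v+\varepsilon$. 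Now $(\partial_t-\Delta+b\cdot\nabla+(c+K))w_\varepsilon\ge(c+K)\varepsilon>0$, the interior first-touching contradiction is immediate, and at a boundary zero the parabolic Hopf lemma (for an operator with nonnegative zeroth-order term, applied at a zero minimum) gives $\partial_\nu w_\varepsilon<0$, contradicting $\partial_\nu w_\varepsilon=e^{-Kt}\partial_\nu v\ge0$. With this correction your argument goes through.
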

    \begin{proof}
   Let $\beta=\min\{\alpha , 0\}-1$, and denote by $\lambda_{1,\beta}$ and $\phi_{1,\beta}$  the first Robin eigenvalue and its corresponding positive normalized eigenfunction. Clearly
       $$
       w(x,t):=e^{-\l_{1, \beta } t}\phi_{1,\beta}(x)
       $$
is strictly positive and satisfies
    \begin{align}\label{2.4}
        \begin{cases}
            \left( \p_t  -\Delta \right) w =  0 ,&\quad (x,t) \in M\times (0,\infty),\\
            \frac{\p }{\p \nu} w + \beta w= 0 , & \quad x \in \p M. \\
        \end{cases}
    \end{align}
For any $\varepsilon >0$, define $v_\varepsilon(t,x):= u(t,x)+\varepsilon w(t,x)$. By \eqref{2.3} and \eqref{2.4}, we derive that
    \begin{align*}
        \begin{cases}
            \left( \p_t  -\Delta \right) v_\varepsilon   \ge  0 ,&\quad (x,t) \in M\times (0,\infty),\\
            \frac{\p }{\p \nu} v_\varepsilon  + \alpha v_\varepsilon  \ge \varepsilon (\alpha - \beta) w > 0 , & \quad x \in \p M, \\
            v_\varepsilon  > 0, & \quad t = 0.
        \end{cases}
    \end{align*}
    We claim that $v_\varepsilon(x,t)>0$ for all $((x,t)\in [0,\infty)\times M$. If not, let $t_0$ be the first time such that  $v_\varepsilon (x_0,t_0)=0$ for some $x_0\in M \cup \p M$, and by strong parabolic maximal principle, we have $x_0\in \p M$. Then we have 
    \begin{align*}
        0 \geq \p_{\nu} v_\varepsilon (x_0,t_0) 
         = -\alpha v_\varepsilon (x_0,t_0) + \varepsilon (\alpha - \beta) w(x_0,t_0)=\varepsilon (\alpha - \beta) w(x_0,t_0), 
    \end{align*}
    contradicting with $ w(x_0,t_0)>0$. Hence
    $$
    u(x,t)+\varepsilon w(x,t)>0,\quad (x, t) \in M\times (0,\infty) 
    $$
   for all $\varepsilon>0$. Then  letting $\varepsilon\to 0$, we have
   $$
   u(x,t)\ge 0,
   $$
proving the theorem.
    \end{proof}
 The following corollary comes true directly from Theorem \ref{thm:3.1}. 
    \begin{corollary}[Uniqueness and Positivity]{\label{Cor:uni}}
       The solution to the Robin heat equation with given initial data is  unique, and the kernel function $\Ha(x,y,t) \geq 0$ almost everywhere if exists.
    \end{corollary}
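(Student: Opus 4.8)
The plan is to deduce both assertions directly from the maximum principle of Theorem~\ref{thm:3.1}, exploiting its linearity in an essential way.

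For uniqueness, suppose $u_1$ and $u_2$ are two solutions of the Robin heat equation \eqref{1.1}--\eqref{1.2} sharing the initial datum $u_0$, and set $w := u_1 - u_2$. By linearity $w$ solves the homogeneous problem
\begin{align*}
    \begin{cases}
        \left( \p_t - \Delta \right) w = 0, & (x,t) \in M \times (0,\infty),\\
        \frac{\p}{\p \nu} w + \alpha w = 0, & (x,t) \in \p M \times (0,\infty),\\
        w(x,0) = 0, & x \in M.
    \end{cases}
\end{align*}
Each of the three conditions holds with the inequality $\ge 0$ (indeed with equality), so Theorem~\ref{thm:3.1} yields $w \ge 0$ on $M \times (0,\infty)$. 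Applying the same theorem to $-w$, which satisfies the identical homogeneous system, gives $-w \ge 0$, hence $w \le 0$. Combining the two bounds forces $w \equiv 0$, i.e. $u_1 = u_2$.

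For the positivity of the kernel, assume $\Ha$ exists and represents solutions through the stated integral formula. First I would take $u_0 \in C^\infty(M)$ with $u_0 \ge 0$; by parabolic regularity the corresponding solution $u$ lies in $C^{2,1}(M \times [0,\infty))$, so Theorem~\ref{thm:3.1} applies and gives $u(x,t) \ge 0$ for all $x \in M$ and $t > 0$. In terms of the kernel this reads
$$
    \int_M \Ha(x,y,t)\, u_0(y)\, dy \ge 0 \qquad \text{for all } u_0 \in C^\infty(M),\ u_0 \ge 0.
$$
Fixing $x$ and $t$, the map $y \mapsto \Ha(x,y,t)$ is integrable and defines a linear functional that is non-negative on all non-negative smooth data; by density of such functions in the non-negative cone of $L^2(M)$ the inequality persists for every non-negative $u_0 \in L^2(M)$. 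Choosing $u_0 = \chi_E$ with $E := \{ y \in M : \Ha(x,y,t) < 0 \}$ then gives $\int_E \Ha(x,y,t)\, dy \ge 0$; since the integrand is strictly negative on $E$, this forces $|E| = 0$. Hence $\Ha(x,y,t) \ge 0$ for almost every $y$, and as $x$ and $t$ were arbitrary the kernel is non-negative almost everywhere.

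The substance here is entirely contained in Theorem~\ref{thm:3.1}; the only genuine care needed is the regularity bookkeeping in the positivity argument. The maximum principle is stated for $C^{2,1}$ solutions, whereas the representation formula holds for $u_0 \in L^2(M)$, so I restrict to smooth non-negative data (where regularity up to $t=0$ is unproblematic) and recover the general case by the density/duality step above. I expect this density argument --- ensuring that the a.e. non-negativity of the $L^2$-kernel follows from its testing against the smooth non-negative cone --- to be the main, though mild, obstacle.
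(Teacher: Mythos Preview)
Your argument is correct and is precisely the standard deduction the paper has in mind; the paper offers no proof beyond the remark that the corollary ``comes true directly from Theorem~\ref{thm:3.1},'' and you have simply written out that deduction. The only place to be slightly more careful is the regularity assertion: Theorem~\ref{thm:3.1} is stated for $C^{2,1}(M\times[0,\infty))$ solutions, so to be safe you may want to test first against non-negative smooth data compactly supported in the interior (or satisfying the compatibility condition $\partial_\nu u_0+\alpha u_0=0$ on $\partial M$), which is still dense in the non-negative $L^2$ cone; the rest of your density/duality step then goes through unchanged.
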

\subsection{Trace  Sobolev Inequalities} \label{sect 2.3}
The analysis of Robin boundary conditions requires careful control of boundary terms. We establish the following fundamental inequalities:
    \begin{lemma}{\label{lm:1}}
         Let $M$ be a compact manifold with smooth boundary. Then there exists a constant $C_{1}$, depending on $M$, such that for all $u\in H^1(M)$ it holds
         \begin{align}\label{2.6}
             \int_{\partial M} u^2 dS \leq C_{1} \left( \Vert \nabla u\Vert _2 \Vert u\Vert_2 + \Vert u\Vert _2^2 \right).
         \end{align}
    \end{lemma}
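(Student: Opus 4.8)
The plan is to bound the boundary integral by interior quantities using the divergence theorem applied to the vector field $u^2X$, where $X$ is a smooth extension of the outward unit normal, and then to pass from smooth functions to general $u\in H^1(M)$ by density. First I would fix a smooth vector field $X$ on $M$ with $X=\nu$ along $\p M$. Such a field exists: in a collar neighborhood $\p M\times[0,\delta)$ of the boundary, where the second coordinate is $\dist(\cdot,\p M)$, the inward unit normal equals $\nabla\dist(\cdot,\p M)$, so $X:=-\nabla\dist(\cdot,\p M)$ satisfies $X|_{\p M}=\nu$ there; multiplying by a cutoff equal to $1$ near $\p M$ and supported in the collar extends $X$ smoothly by zero to all of $M$. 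Since $M$ is compact, there is a constant $C=C(M)$ with $|X|\le C$ and $|\operatorname{div}X|\le C$ on $M$.

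For $u\in C^\infty(\overline M)$, the divergence theorem applied to $u^2X$ gives
$$\int_{\p M}u^2\,dS=\int_{\p M}u^2\langle X,\nu\rangle\,dS=\int_M\operatorname{div}(u^2X)\,dx=\int_M\big(2u\langle\nabla u,X\rangle+u^2\operatorname{div}X\big)\,dx,$$
using $\langle X,\nu\rangle=1$ on $\p M$. Bounding $|X|,|\operatorname{div}X|$ by $C$ and applying the Cauchy--Schwarz inequality yields
$$\int_{\p M}u^2\,dS\le 2C\int_M|u|\,|\nabla u|\,dx+C\int_Mu^2\,dx\le 2C\Vert\nabla u\Vert_2\Vert u\Vert_2+C\Vert u\Vert_2^2,$$
which is \eqref{2.6} with $C_1=2C$ for smooth $u$.

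To remove the smoothness hypothesis I would use that $C^\infty(\overline M)$ is dense in $H^1(M)$: given $u\in H^1(M)$, choose $u_n\in C^\infty(\overline M)$ with $u_n\to u$ in $H^1(M)$; applying the previous estimate to $u_n-u_m$ shows $(u_n|_{\p M})_n$ is Cauchy in $L^2(\p M)$, hence converges to the trace of $u$, and letting $n\to\infty$ in \eqref{2.6} for $u_n$ (using continuity of the $L^2(M)$ and $L^2(\p M)$ norms) establishes the inequality for all $u\in H^1(M)$. The argument is essentially standard; the only mildly delicate points are the construction of $X$ in the collar and the identification of the limiting boundary function as the trace of $u$, both of which follow from the collar neighborhood theorem and the density of $C^\infty(\overline M)$ in $H^1(M)$, so I do not expect a genuine obstacle here.
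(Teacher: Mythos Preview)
Your argument is correct. The paper's proof is a one-line application of the $W^{1,1}$ trace theorem from Evans, $\int_{\p M}|v|\,dS\le C(M)\big(\int_M|\nabla v|+\int_M|v|\big)$, with the substitution $v=u^2$ followed by Cauchy--Schwarz. Your proof unwinds this citation: the proof of that trace inequality is precisely the divergence-theorem trick you carry out, so the underlying computation is the same. The difference is only in presentation---the paper outsources the integration by parts to a standard reference, while you give a self-contained argument (with the explicit construction of the normal extension $X$ and the density step spelled out). Both buy the same estimate with the same dependence of $C_1$ on $M$; your version is longer but requires no external input.
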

    \begin{proof}
       Recall from  standard trace theorem \cite[Chapter 5] {evans2022partial} that for all $v\in W^{1,1}(M)$ it holds
        \begin{equation*}
            \int_{\partial M} |v| dS \leq C(M) \left( \int_M |\nabla v| + \int_{M} |v| dx \right),
        \end{equation*}
       and taking $v= u^2$ gives \eqref{2.6}.
    \end{proof}
    \begin{lemma}{\label{thm:high} }
        There exist  positive constants $C_2=C_2(M)$(depending on $M$) and $C_3=C_3(M,\a)$ (depending on $\alpha$ and $M$),  such that
        \begin{equation}\label{2.7}
            \lambda_{k,\alpha}(M) \geq C_2 k^{ \frac{1}{m-1} } - C_{3}.
        \end{equation}
    \end{lemma}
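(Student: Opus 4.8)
The plan is to bound the Robin eigenvalues below by a fixed fraction of the Neumann eigenvalues, for which a Weyl-type growth estimate of the required order is classical, using the trace inequality of Lemma~\ref{lm:1} to absorb the boundary term $\a\int_{\partial M}u^2\,dS$ — the only obstruction when $\a<0$. Throughout assume $m\ge 2$, so that $m-1\ge 1$.

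I would first record a coercivity estimate for the Robin quadratic form: for every $u\in H^1(M)$,
\begin{equation*}
\int_M|\nabla u|^2\,dx + \a\int_{\partial M}u^2\,dS \;\ge\; \tfrac12\int_M|\nabla u|^2\,dx - C_3\int_M u^2\,dx ,
\end{equation*}
with $C_3=C_3(M,\a)\ge 0$. For $\a\ge 0$ this is immediate with $C_3=0$, the boundary term being nonnegative. For $\a<0$, Lemma~\ref{lm:1} gives $\int_{\partial M}u^2\,dS\le C_1(\|\nabla u\|_2\|u\|_2+\|u\|_2^2)$, and Young's inequality $|\a|C_1\|\nabla u\|_2\|u\|_2\le \tfrac12\|\nabla u\|_2^2+\tfrac12|\a|^2C_1^2\|u\|_2^2$ yields the estimate with $C_3=\tfrac12|\a|^2C_1^2+|\a|C_1$; since $C_1=C_1(M)$, all of the $\a$-dependence sits in $C_3$. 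Inserting this into the min--max formula \eqref{2.1}: for any $k$-dimensional subspace $H\subset H^1(M)$ one has $\sup_{0\ne u\in H}R_\a(u)\ge \tfrac12\sup_{0\ne u\in H}R_N(u)-C_3$, where $R_\a$ and $R_N$ are the Robin and Neumann Rayleigh quotients, and taking the infimum over such $H$ gives $\lambda_{k,\a}(M)\ge \tfrac12\,\mu_k(M)-C_3$, where $\mu_k(M)$ is the $k$-th Neumann eigenvalue of $M$.

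To conclude I would invoke a lower bound on the Neumann spectrum. By Weyl's asymptotic law for the Neumann Laplacian on a compact manifold with smooth boundary, $\mu_k(M)\sim c_m\vol(M)^{-2/m}k^{2/m}$, so there are $c=c(M)>0$ and $c'=c'(M)\ge 0$ with $\mu_k(M)\ge c\,k^{2/m}-c'$ for all $k\ge 1$; alternatively this follows without quoting Weyl's law from the Sobolev embedding $H^1(M)\hookrightarrow L^{2m/(m-2)}(M)$, which yields by Nash's argument the on-diagonal Neumann heat kernel bound $K(x,x,t)\le Ct^{-m/2}$ for $0<t\le1$, hence $\sum_k e^{-\mu_k t}\le C\vol(M)\,t^{-m/2}$, and taking $t=1/\Lambda$ bounds $\#\{k:\mu_k\le\Lambda\}$ by $eC\vol(M)\Lambda^{m/2}$ for $\Lambda\ge1$, which reads off as $\mu_k(M)\ge c\,k^{2/m}-c'$. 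Since $m\ge2$ gives $\tfrac2m\ge\tfrac1{m-1}$, we have $k^{2/m}\ge k^{1/(m-1)}$ for all $k\ge1$, and therefore
\begin{equation*}
\lambda_{k,\a}(M)\;\ge\;\tfrac{c}{2}\,k^{1/(m-1)}-\Big(\tfrac{c'}{2}+C_3\Big),
\end{equation*}
which is \eqref{2.7} with $C_2:=c/2$ (depending only on $M$) and the additive constant $c'/2+C_3$ (depending on $M$ and $\a$).

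The only non-elementary ingredient is the lower Weyl bound for the Neumann spectrum used in the last step; everything else is immediate once \eqref{2.6} is available. Should a self-contained argument be preferred, the Sobolev$\,\to\,$heat-kernel$\,\to\,$eigenvalue-count chain indicated above provides one, the single point needing care being to keep the constants straight so that $C_2$ does not depend on $\a$ — the entire $\a$-dependence being absorbed in the trace-inequality step.
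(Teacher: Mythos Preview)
Your proof is correct and follows essentially the same route as the paper: absorb the boundary term via Lemma~\ref{lm:1} and Young's inequality to get $\lambda_{k,\a}\ge\tfrac12\lambda_{k,0}-C_3$, then invoke a lower bound on the Neumann spectrum. The only difference is cosmetic: the paper quotes the bound $\lambda_{k,0}(M)\ge Ck^{1/(m-1)}$ directly from \cite[Theorem~10.1]{li2012geometric}, whereas you derive the sharper Weyl exponent $k^{2/m}$ and then discard the extra strength via $2/m\ge 1/(m-1)$.
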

    \begin{proof}
Recall from Theorem 10.1 of \cite{li2012geometric} that there exists a positive constant $C$, depending on the volume of $M$,  such that
    \begin{equation}\label{2.8}
            \lambda_{k,0}(M) \geq C k^{\frac{1}{m-1}}.
        \end{equation}
  
    If $\alpha \geq 0$, it follows from \eqref{2.1} that $\l_{k, \a}$ is monotone increasing in $\a$, hence using   \eqref{2.8}  we have
        \begin{align}\label{2.9}
            \lambda_{k,\alpha}(M) \geq \lambda_{k,0}(M)\ge C k^{\frac{1}{m-1}}. 
        \end{align}
 
  If $\alpha < 0$, from Lemma \ref{lm:1} we know that there exists a constant $C_1=C_1(M)$ such that
        \begin{align*}
            \int_{\partial M} u^2 dS &\leq C_1(\Vert \nabla u\Vert_2 \Vert u\Vert_2+\Vert  u\Vert_2^2)\\
            & \le  -\frac{1}{2 \alpha } \Vert\nabla u\Vert_2^2-\frac{\a}{2}C_1^2\Vert  u\Vert_2^2 +C_1||u||_2^2\\
            &:= -\frac{1}{2 \alpha } \Vert\nabla u\Vert_2^2+\frac{C_3}{\a}||u||_2^2
        \end{align*}
      where $C_3$ is a positive constant depending on $\alpha$ and $M$. Plugging above inequality into \eqref{2.1} we have
        \begin{align}\label{2.10}
        \begin{split}
        \lambda_{k,\alpha}(M) &= \inf_{\substack{H \subset H^1(M),\\ \dim H=k} } \sup_{0 \neq u \in H}\frac{\int_{M} |\nabla u|^2 dx + \int_{\partial M} \alpha u^2 dS }{ \int_{M} u^2 dx  }  \\ 
        & \geq \inf_{\substack{H \subset H^1(M),\\ \dim H=k} } \sup_{0 \neq u \in H}  \frac{1}{2} \frac{\int_{M} |\nabla u|^2 dx }{  \int_{M} u^2 \, dx }-C_3 \\
        & = \frac{1}{2} \lambda_{k,0}(M)-C_3\\
        &\ge \frac 1 2 C k^{\frac 1 {m-1}}-C_3
        \end{split}
        \end{align}
    for $\a<0$,    where we used \eqref{2.8} in the last inequality.
From \eqref{2.9} and \eqref{2.10}, we conclude 
\eqref{2.7} holds by choosing $C_2:=\frac 1 2 C$.
    \end{proof}

In this subsection, we use compactness argument to prove a trace Sobolev inequality, which will be used to handle the Robin boundary condition in the proof of Theorem \ref{thm1}.  
\begin{lemma}{\label{lm2.3}}
    Let $M$ be a compact $m$-dimensional manifold with  smooth boundary. Then there exists a  positive constant $C_{4}>0$, depending on $M$, such that for any $f \in H^1(M)$ it holds
  \begin{equation}\label{2.11}
        \int_{M} |\nabla f|^2 \, dx + \int_{\p M} |f|^2 dS \geq C_{4} 
         \left( \int_{M} |f|^{\frac{2m}{m-2}} \right)^{\frac{m-2 }{m}}
    \end{equation}
if $m\ge 3$;  and if $m=2$,
 \begin{equation}\label{2.12}
        \int_{M} |\nabla f|^2 dx + \int_{\p M} |f|^2 dS \geq C_{4} \left( \int_{M} |f|^p \right)^{\frac{1}{p}}
    \end{equation}
    for any given $p>2$ with constant $C_4$ depending on $p$.
\end{lemma}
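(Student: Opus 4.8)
The plan is to deduce Lemma \ref{lm2.3} from the classical Sobolev inequality on $M$ together with the trace estimate of Lemma \ref{lm:1}. Recall that since $M$ is compact with smooth boundary, the Sobolev embedding $H^1(M)\hookrightarrow L^{2m/(m-2)}(M)$ (for $m\ge 3$) gives a constant $C_S=C_S(M)$ with
\begin{equation*}
  \lf(\int_M |f|^{\frac{2m}{m-2}}\ri)^{\frac{m-2}{m}} \le C_S\lf(\Vert\nabla f\Vert_2^2 + \Vert f\Vert_2^2\ri)
\end{equation*}
for all $f\in H^1(M)$, and when $m=2$ the analogous statement holds with $L^p(M)$ for every $p>2$ and $C_S=C_S(M,p)$. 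So it suffices to bound the right-hand side, and in particular to bound $\Vert f\Vert_2^2$, by a constant times the left-hand side of \eqref{2.11}. The crucial point, and the main obstacle, is that the left-hand side of \eqref{2.11} has no zeroth-order interior term: $\int_M|\nabla f|^2 + \int_{\p M}|f|^2$ could a priori be small while $\Vert f\Vert_2$ is not. I expect to handle this by a compactness/contradiction argument (a Poincaré-type inequality with a boundary term).

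First I would reduce to showing: there exists $C=C(M)>0$ such that
\begin{equation}\label{eq:poincare-boundary}
  \Vert f\Vert_2^2 \le C\lf(\int_M |\nabla f|^2\,dx + \int_{\p M}|f|^2\,dS\ri)\qquad\text{for all }f\in H^1(M).
\end{equation}
Indeed, combining \eqref{eq:poincare-boundary} with the Sobolev inequality above yields
\begin{equation*}
  \lf(\int_M |f|^{\frac{2m}{m-2}}\ri)^{\frac{m-2}{m}} \le C_S(1+C)\lf(\int_M|\nabla f|^2 + \int_{\p M}|f|^2\ri),
\end{equation*}
which is \eqref{2.11} with $C_4 := 1/(C_S(1+C))$; the case $m=2$ is identical with $p$ in place of $2m/(m-2)$.

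To prove \eqref{eq:poincare-boundary} I would argue by contradiction. If it fails, there is a sequence $f_k\in H^1(M)$ with $\Vert f_k\Vert_2 = 1$ and $\int_M|\nabla f_k|^2 + \int_{\p M}|f_k|^2 \to 0$. Then $\{f_k\}$ is bounded in $H^1(M)$, so after passing to a subsequence $f_k\rightharpoonup f$ weakly in $H^1(M)$, $f_k\to f$ strongly in $L^2(M)$ (Rellich), and the trace operator $H^1(M)\to L^2(\p M)$ is compact, so $f_k|_{\p M}\to f|_{\p M}$ strongly in $L^2(\p M)$. Strong $L^2$ convergence gives $\Vert f\Vert_2=1$; weak lower semicontinuity of $\int_M|\nabla\cdot|^2$ forces $\nabla f = 0$ a.e., so $f$ is (locally) constant, hence constant on each connected component of $M$; and $\int_{\p M}|f|^2 = \lim\int_{\p M}|f_k|^2 = 0$ forces $f=0$ on $\p M$. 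Since every connected component of $M$ meets $\p M$ (as $M$ has smooth nonempty boundary — or one argues component by component), the constant $f$ vanishes, contradicting $\Vert f\Vert_2 = 1$. This establishes \eqref{eq:poincare-boundary} and hence the lemma. The only subtlety to flag in the writeup is the use of compactness of the trace map $H^1(M)\to L^2(\p M)$, which holds on compact manifolds with smooth boundary; alternatively one can route the boundary control through Lemma \ref{lm:1} and an interpolation, but the compactness argument is cleanest.
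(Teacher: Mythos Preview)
Your argument is correct. Both you and the paper run a compactness/contradiction argument, but the structure differs: the paper argues directly on \eqref{2.11}, normalizing $\Vert f_k\Vert_{2m/(m-2)}=1$, then observing that $\Vert\nabla f_k\Vert_2\to 0$ upgrades weak $H^1$ convergence to strong $H^1$ convergence, so the continuous Sobolev embedding carries $f_k\to 0$ in $L^{2m/(m-2)}$, a contradiction. You instead factor through the intermediate Poincar\'e-type estimate \eqref{eq:poincare-boundary}, normalizing in $L^2$ and using only Rellich and trace compactness, then feed this into the standard Sobolev inequality. Your route is slightly longer but arguably more transparent, since it isolates where the boundary term is actually doing work (killing constants) and avoids the subtle step of promoting to strong $H^1$ convergence; the paper's route is more direct but relies on the reader noticing that $\Vert\nabla f_k\Vert_2\to 0$ plus weak convergence gives strong convergence. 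Both approaches tacitly assume every connected component of $M$ meets $\partial M$, which you correctly flag.
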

\begin{proof}
We  prove  \eqref{2.11} via compactness argument.    Suppose  \eqref{2.11}  fails, then we can choose a sequence $\{f_{k}\}_{k=1}^{\infty} \subset H^1(M)$ satisfying 
 $\Vert f_k\Vert_\frac{2m}{m-2}=1$
and
 \begin{align}\label{2.13}
        \Vert f_k\Vert _{L^2(\p M)} + \Vert \nabla f_k\Vert_2 \leq \frac{1}{k}.
    \end{align}

 On one hand,  Using H\"older inequality, we estimate that
    \begin{align}\label{2.14}
 \Vert f_k\Vert_2 \le  \Vert f_k\Vert_{\frac{2m}{m-2}} \cdot\vol(M)^{\frac{1}{m}}
        = \vol(M)^{\frac{1}{m}}. 
    \end{align}
According to \eqref{2.13} and \eqref{2.14}, we see that $f_k$ is uniformly bounded in $H^1(M)$, therefore there exists  a subsequence $f_{k_s}$ converges to $f_0$ in $H^1(M)$ as $s\to \infty$. Moreover, by \eqref{2.13}, we have $f_0(x)=0$.  
  
  On the other hand, by Sobolev embedding theorem,  we have
  $f_{k_s}$ converges to $f_0$ in $L^{\frac{2m}{m-2}}(M)$ as $s\to \infty$, so
    \begin{align*}
        \Vert f_{k_s}\Vert_\frac{2m}{m-2} \to 0,
    \end{align*}
  contradicting with $\Vert f_{k_s}\Vert_\frac{2m}{m-2}=1$. Hence \eqref{2.11} comes true. 
  
If $m = 2$, \eqref{2.12} holds by the similar argument, we omit the details.
\end{proof}

\section{Proof of Theorem \ref{thm1}}{\label{Sect.3}}
This section presents the detailed proof of our main result, establishing the existence and uniqueness of the Robin heat kernel for all $\a\in \R$. While the case of positive $\a$ is essentially covered by Theorem 2.1.4 of \cite{Da89}, we provide a complete and self-contained treatment for both positive and negative parameters to ensure full mathematical rigor and to highlight the distinct technical challenges that emerge in each regime.

\subsection{The Case of Positive Robin Parameter}
We begin by establishing uniform estimates for Robin eigenfunctions, which are crucial for controlling the convergence of the heat kernel expansion. See also \cite{Dan00-1, vB14} for the previous results on Robin boundary problems with positive Robin parameters, where energy methods were used to prove the existence results.
\begin{lemma}\label{lm3.1}
     Let $M$ be a compact $m$-dimensional manifold with smooth boundary. For $m\ge 3$, define $\gamma=\frac{m}{m-2}$; for $m=2$, let  $\gamma>2$ be  arbitrary. Let $\l_{i, \a}$ be the $i$th Robin eigenvalue with Robin parameter $\a>0$, $\phi_{i, \a}$ be the corresponding positive normalized eigenfunction, and $C_4$ be the constant defined in Lemma \ref{lm2.3}.
       Then, the $L^\infty$ norm of $\phi_{i, \a}$ satisfies
       \begin{align} \label{3.1}
         \Vert \phi_{i, \a}\Vert_{\infty}\le C_5 \l_{i, \a}^{\frac{1}{2} \frac{\gamma}{\gamma-1}},
     \end{align}
     where $C_5= \gamma ^{ \frac{1}{2} \frac{\gamma}{(\gamma-1)^2} } \left(\frac{2}{C_{4} \min\{1,\alpha\}}\right) ^{\frac{1}{2} \frac{\gamma}{\gamma-1}}$.
\end{lemma}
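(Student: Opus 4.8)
The plan is to prove \eqref{3.1} by a Moser-type iteration (De Giorgi--Nash--Moser technique) applied to the eigenfunction equation, using the trace Sobolev inequality of Lemma \ref{lm2.3} to absorb the boundary contributions that arise from integration by parts under the Robin condition. Write $\phi=\phi_{i,\a}$ and $\l=\l_{i,\a}$ for brevity. The starting point is the weak formulation of \eqref{1.3}: for every test function $\psi\in H^1(M)$,
\[
\int_M \nabla\phi\cdot\nabla\psi\,dx + \a\int_{\p M}\phi\,\psi\,dS = \l\int_M \phi\,\psi\,dx.
\]
I would choose $\psi=\phi\,|\phi|^{2(\b-1)}$ for an increasing sequence of exponents $\b=\b_n$, so that the left-hand side produces $\frac{2\b-1}{\b^2}\int_M|\nabla(|\phi|^\b)|^2\,dx + \a\int_{\p M}|\phi|^{2\b}\,dS$. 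Since $\a>0$, the boundary term has the favorable sign; combining with the gradient term and using $\min\{1,\a\}$ as a uniform lower bound for the coefficients, one gets
\[
\min\{1,\a\}\Big(\int_M|\nabla(|\phi|^\b)|^2\,dx + \int_{\p M}|\phi|^{2\b}\,dS\Big)\ \le\ C\,\b\,\l\int_M|\phi|^{2\b}\,dx,
\]
where the factor $\b$ comes from $\frac{2\b-1}{\b^2}\cdot\b^2 \sim \b$ after clearing denominators (being slightly careful: $\frac{2\b-1}{\b^2}\ge \frac1\b$ for $\b\ge 1$, and $\a\le \a\b$, so the clean bound is $\min\{1,\a\}/\b$ times the left side $\le \l \int |\phi|^{2\b}$, giving the stated $\b$). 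Applying Lemma \ref{lm2.3} to $f=|\phi|^\b$ then yields the reverse-H\"older inequality
\[
\big\||\phi|^\b\big\|_{2\g}^2 \ \le\ \frac{\b\,\l}{C_4\min\{1,\a\}}\,\big\||\phi|^\b\big\|_2^2,
\qquad\text{i.e.}\qquad
\|\phi\|_{2\b\g}\ \le\ \Big(\frac{\b\,\l}{C_4\min\{1,\a\}}\Big)^{\frac{1}{2\b}}\|\phi\|_{2\b},
\]
valid for $m\ge 3$ with $\g=\frac{m}{m-2}$, and for $m=2$ with any $\g>2$.

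Next I would iterate this over the geometric sequence $\b_n=\g^n$, $n=0,1,2,\dots$, starting from $\|\phi\|_2=1$. Taking logarithms and summing the telescoping product, the exponent accumulates as $\sum_{n\ge 0}\g^{-n}=\frac{\g}{\g-1}$ on the $\l$-factor and on the constant $\big(C_4\min\{1,\a\}\big)^{-1}$, while the $\b_n$-factor contributes $\sum_{n\ge0} n\g^{-n}\log\g = \frac{\g}{(\g-1)^2}\log\g$ to the log, i.e. a factor $\g^{\g/(\g-1)^2}$. Inserting the factor $2$ that I suppressed above (it enters as $(2/(C_4\min\{1,\a\}))$ inside the iteration constant) reproduces exactly
\[
\|\phi\|_\infty = \lim_{n\to\infty}\|\phi\|_{2\g^n}\ \le\ \g^{\frac12\frac{\g}{(\g-1)^2}}\Big(\frac{2}{C_4\min\{1,\a\}}\Big)^{\frac12\frac{\g}{\g-1}}\ \l^{\frac12\frac{\g}{\g-1}}\ =\ C_5\,\l_{i,\a}^{\frac12\frac{\g}{\g-1}},
\]
which is \eqref{3.1}. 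For $m=2$ the identical iteration works with $\g$ any fixed number $>2$ and $C_4=C_4(p)$ from \eqref{2.12} (here $p=2\g$), and the passage to the $L^\infty$ limit is justified because $\|\phi\|_{2\g^n}$ is a nondecreasing-in-the-limit quantity dominated by the stated bound, so by a standard argument (if $\|\phi\|_\infty=\infty$ were possible on a positive-measure set the $L^{2\g^n}$ norms would blow up) the limit equals the essential sup.

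The main obstacle — and the only place requiring genuine care rather than bookkeeping — is making the test function $\psi=\phi|\phi|^{2(\b-1)}$ admissible and the integration by parts rigorous: a priori $\phi\in H^1(M)$ only, so $|\phi|^{2\b-1}$ need not lie in $H^1$. The standard remedy is to truncate, replacing $|\phi|$ by $\min\{|\phi|,L\}$ (or using $F_L(\phi)$ with $F_L$ Lipschitz and linear past level $L$), carry the whole iteration through with the truncated function — for which all integrals are finite and the trace inequality applies — obtain the estimate with constants independent of $L$, and then let $L\to\infty$ by monotone convergence. One also uses at the outset that $\phi\in L^\infty(M)$, which follows from elliptic regularity (interior plus boundary Schauder/$L^p$ estimates for the Robin problem, $\phi$ being smooth up to $\p M$ since $M$ and $\p M$ are smooth), so the truncation is eventually vacuous; the point of the iteration is to get the \emph{quantitative} dependence on $\l_{i,\a}$ with the explicit constant $C_5$. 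I would state the truncation step briefly and refer to \cite{li2012geometric} or \cite{evans2022partial} for the routine details, keeping the body of the proof focused on the exponent arithmetic that produces $C_5$ and the power $\frac12\frac{\g}{\g-1}$.
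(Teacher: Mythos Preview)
Your proposal is correct and follows essentially the same Moser iteration as the paper: derive the one-step inequality $\|\phi\|_{k\gamma}\le\big(\tfrac{k\lambda}{C_4\min\{1,\alpha\}}\big)^{1/k}\|\phi\|_k$ from the eigenfunction equation together with the trace Sobolev inequality of Lemma~\ref{lm2.3}, then iterate over $k=2\gamma^j$ and sum the geometric series to obtain $C_5$. The only cosmetic difference is that the paper reaches the one-step inequality by applying Kato's inequality to $f=|\phi_{i,\alpha}|$ and multiplying the resulting $\Delta f\ge -\lambda f$ by $f^{k-1}$, rather than testing the weak formulation against $\phi|\phi|^{2(\beta-1)}$ as you do; the two routes are equivalent (your $2\beta$ is the paper's $k$), and your discussion of truncation is more careful than the paper's.
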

\begin{proof}
Let $f=|\phi_{i,\alpha}|$. The eigenvalue equation $-\Delta \phi_{i,\alpha}=\l_{i, \a} \phi_{i,\alpha}$ implies
    \begin{equation*}
        \Delta f \geq - \lambda_{i,\alpha} f
    \end{equation*}
    in the distribution sense, hence for all $k\ge 2$ it holds
    \begin{align}\label{3.2}
        -\int_M f^{k-1}\Delta f\le \l_{i, \a}\int_M f^k.
    \end{align}
Using integration by parts, we estimate that 
    \begin{align}\label{3.3}
    \begin{split}
            \int_{M} f^{k-1} \Delta f &= -(k-1) \int_{M} f^{k-2} |\nabla f|^2 + \int_{\p M} f^{k-1} \frac{\p f}{\p \nu} dS \\
            &= -\frac{4(k-1)}{k^2} \int_{M} |\nabla f^{\frac{k}{2}  } |^2 - \alpha \int_{\p M} |f^{\frac{k}{2}}|^2 dS\\
            & \leq -\frac{2\min\{1,\alpha\}}{k} \left( \int_{M} |\nabla f^{\frac{k}{2}  }   |^2  + \int_{\p M} |f^{ \frac{k}{2} }|^2 dS \right),
        \end{split}
    \end{align}
    where we used $k\ge 2$ in the last inequality. 
  Recall from  Lemma \ref{lm2.3} that
   \begin{align*}
    \int_{M} |\nabla f^{\frac{k}{2}  }   |^2  + \int_{\p M} |f^{ \frac{k}{2} }|^2 dS\ge C_{4} \left(\int_{ M} |f|^{ k\gamma }\right)^{1/\gamma},
\end{align*}
then we conclude from  \eqref{3.3} and  the above inequality that
     \begin{equation}\label{3.4}
            \int_{M} f^{k-1} \Delta f 
             \leq -\frac{C_{4} \cdot \min\{1,\alpha\}}{k} \left( \int_{M} |f|^{ k\gamma } \right)^{1/\gamma }.
    \end{equation}
Putting \eqref{3.2} and \eqref{3.4} together, we obtain 
    \begin{equation*}
        \int_{M} |\phi_{i,\alpha}|^k \geq \frac{C_{4} \cdot \min\{1,\alpha\}}{k \lambda_{i,\alpha}} \left( \int_{M} |\phi_{i,\alpha}(x)|^{k\gamma } \right)^{1/\gamma},
    \end{equation*}
i.e.
    \begin{equation}\label{3.5}
        \Vert \phi_{i,\alpha} \Vert_{k\gamma} \leq \left( \frac{k \lambda_{i,\alpha} }{C_{4} \cdot \min\{1,\alpha\}} \right)^{\frac{1}{k} }  \Vert\phi_{i,\alpha} \Vert_k,
    \end{equation}
    and substituting  $k = 2 \gamma^j$ for $j=0,1,2,\cdots$ in \eqref{3.6}, we get
    \begin{align*}
        \Vert \phi_{i,\alpha}\Vert_{2\gamma^{j+1}} \leq \left( \frac{2\gamma^j \lambda_{i,\alpha} }{C_{4} \cdot \min\{1,\alpha\} } \right)^{ \frac{1}{2\gamma^j}}  \Vert \phi_{i,\alpha} \Vert _{2\gamma^j}.
    \end{align*}
Observing  that $\Vert\phi_{i,\alpha}\Vert_2 =1$, we have
    \begin{align*}
        \Vert\phi_{i,\alpha}\Vert_{2\gamma^j} \leq \prod_{l=0}^j  \left( \frac{2\gamma^l \lambda_{i,\alpha} }{C_{4} \cdot \min\{1,\alpha\} } \right)^{ \frac{1}{2\gamma^l}},
    \end{align*}
   and let $j\to \infty$ in above inequality, we derive 
    \begin{align*}
        ||\phi_{i,\alpha}||_{\infty} \leq \gamma ^{ \frac{1}{2} \frac{\gamma}{(\gamma-1)^2} } \left(\frac{2\lambda_{i,\alpha}}{C_{4} \cdot \min\{1,\alpha\}}\right) ^{\frac{1}{2} \frac{\gamma}{\gamma-1}}
        = C_5 \lambda_{i,\a} ^{\frac{1}{2} \frac{\gamma}{\gamma-1}},
    \end{align*}
  proving \eqref{3.1}.
\end{proof}
\begin{proof}[Proof of Theorem \ref{thm1} for $\a>0$]
    We only prove the case for $m\ge 3$, and for $m=2$ the argument is similar. In which case, we have
    \begin{align*}
        \Vert \phi_{i, \a}\Vert_{\infty}\le C_5 \l_{i, \a}^{m/4},
    \end{align*}
    by Lemma \ref{lm3.1}.
    Let
    $$
    d(t):=\sqrt{\frac{m^m}{e^m}}\frac 1 {t^{m/2}}, \quad t>0,
    $$
then it follows easily that 
    \begin{align}\label{3.6}
        e^{-xt} x^{\frac{m}{2} } \leq d(t) e^{-\frac{xt}{2} }, \quad x>0, \quad t>0.
    \end{align}
Using \eqref{3.1} and \eqref{3.6},  we estimate that 
    \begin{align*}
   |e^{- \lambda_{i,\alpha} t } \phi_{i,\alpha}(x) \phi_{i,\alpha}(y)| \leq& e^{- \lambda_{i,\alpha} t } ||\phi_{i,\alpha}||_{\infty}^2 
         \leq C_5^2 e^{- \lambda_{i,\alpha} t} \lambda_{i,\alpha}^{\frac{m}{2}}\\
        \leq& C_5^2 d(t) e^{-\frac{\lambda_{i,\alpha} t }{2} } 
         \leq C_5^2 d(t) e^{- \frac{C_2 i^{\frac{1}{m-1} }t }{ 2}  }  
    \end{align*}
    where we used \eqref{2.9} in the last inequality. Hence we have
    \begin{align*}
         \Ha(x,y,t): = \sum_{i=1}^{\infty} e^{ - \lambda_{i,\alpha} t } \phi_{i,\alpha}(x) \phi_{i,\alpha}(y)
    \end{align*}
    converges uniformly in $M \times M \times [\varepsilon, \infty)$ for any $\varepsilon>0$. Since
    \begin{align*}
        \int_{M} \langle \nabla \phi_{i,\alpha} , \nabla \phi_{j,\alpha} \rangle + \alpha \int_{\p M} \phi_{i,\alpha} \phi_{j,\alpha} = \delta_{ij} \lambda_{i,\alpha},
    \end{align*}
    then 
    \begin{align*}
        &\int_{M} |\sum_{i=1}^k e^{- \lambda_{i,\alpha} t} \phi_{i,\alpha}(x) \nabla \phi_{i,\alpha}(y) |^2 + \alpha \int_{\p M} |\sum_{i=1}^k e^{- \lambda_{i,\alpha} t} \phi_{i,\alpha}(x) \phi_{i,\alpha}(y) |^2 \\
         = &\sum_{i=1}^k e^{-2 \lambda_{i,\alpha} t } \lambda_{i,\alpha} \phi_{i,\alpha}(x) \phi_{i,\alpha}(x), 
    \end{align*}
    which is uniformly bounded for any $k>0$. Since the truncated sums 
    \begin{equation*}
    \sum_{i=1}^k e^{- \lambda_{i,\alpha} t} \phi_{i,\alpha}(x) \phi_{i,\alpha}(y)
    \end{equation*}
    satisfy the heat equation and the  Robin boundary condition, the limit function $\Ha(x,y,t)$  inherits these properties as a weak solution, which by regularity theory becomes smooth. Moreover, for any given $u_0(x)\in L^2(M)$, $u(x,t):=\int_M H_\a(x,y, t)u_0(y)\, dy$ is a solution of \eqref{1.1} with the Robin boundary condition and $\lim_{t\to 0^+} u(x,t)=u_0(x)$.
    
    Theorem \ref{thm:3.1} asserts that $f(x,t)$ is positive on $(M \setminus \p M) \times (M \setminus \p M) \times (0, \infty)$ whenever $f_0 \geq 0$ on $M$. In addition, the Robin boundary condition and Corollary \ref{Cor:uni} give the uniqueness of the heat kernel, since there is only one solution with given initial data. Hence, we complete the proof of Theorem \ref{thm1} for $\a>0$. 
\end{proof} 
\subsection{The Case of Negative Robin Parameter}
When Robin parameter $\a<0$, the proof of Lemma \ref{lm3.1} is invalid since the trace Sobolev inequality cannot be directly applied. Hence, the argument for $\a>0$ does not apply to the case $\a<0$. Fortunately,  we consider the eigenvalue gap $\l_{i, \a}-\l_{1,\a}$ to overcome technical difficulties. To begin with, we recall the following well-known Sobolev inequality. 
\begin{lemma}\label{lm3.2}
    Let $M$ be a  complete $m$-dimensional manifold,  possibly with boundary, and $\gamma$ be the constant defined in Lemma \ref{lm3.1}.\\
(1)
For $m \ge 3$, there exists a positive constant
$C_6$ depending  on the Neumann $\frac{m}{m-1}$-Sobolev constant of $M$ (see \cite[Definition 9.4]{li2012geometric}), and $C_7$ depending on the volume of $M$,  such that
\begin{align}\label{3.7}
    \int_M |\nabla f|^2\ge C_6\Big(\big(\int_M |f|^{2\gamma}\big)^{\frac{1}{\gamma}}-C_7\int_M f^2\Big)
\end{align}
for all $f\in H^{1,2}(M)$. \\
(2) For $m=2$, \eqref{3.7} holds with  positive constants
$C_6$ and $C_7$, depending on $\gamma$.
\end{lemma}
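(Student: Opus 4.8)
The plan is to derive \eqref{3.7} by feeding $|f|^{2}$ (or a suitable power of $|f|$) into the Neumann $\tfrac{m}{m-1}$-Sobolev inequality of $M$. First I would reduce to bounded $f\ge 0$: since $f_{N}:=\min\{|f|,N\}$ satisfies $|\nabla f_{N}|\le|\nabla f|$ a.e.\ and $\int_{M}f_{N}^{2\gamma}\uparrow\int_{M}|f|^{2\gamma}$ by monotone convergence, it suffices to prove \eqref{3.7} for bounded nonnegative $f$ and then let $N\to\infty$ (this also yields $f\in L^{2\gamma}(M)$ a posteriori, which is what makes $f^{2}$ lie in $W^{1,\mu}$). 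Assume $m\ge 3$ and set $\mu:=\tfrac{m}{m-1}$, whose Sobolev conjugate is $\mu^{\ast}=\tfrac{m\mu}{m-\mu}=\tfrac{m}{m-2}=\gamma$. I would then invoke the finiteness of the Neumann $\mu$-Sobolev constant $S=S^{N}_{\mu}(M)$ of a compact manifold with smooth boundary (see \cite[Definition 9.4]{li2012geometric}), i.e.
\[
  \left(\int_{M}|g-\bar g|^{\gamma}\right)^{1/\gamma}\le S\left(\int_{M}|\nabla g|^{\mu}\right)^{1/\mu},
  \qquad \bar g:=\frac{1}{\vol(M)}\int_{M}g,
\]
for every $g\in W^{1,\mu}(M)$.

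The next step is to apply this with $g=f^{2}$. The chain rule gives $\nabla g=2f\nabla f$, and H\"older's inequality with the conjugate exponents $\tfrac{2(m-1)}{m-2}$ and $\tfrac{2(m-1)}{m}$ — chosen precisely so that $\mu\cdot\tfrac{2(m-1)}{m-2}=2\gamma$ and $\mu\cdot\tfrac{2(m-1)}{m}=2$ — gives
\[
  \left(\int_{M}|\nabla g|^{\mu}\right)^{1/\mu}
  \le 2\left(\int_{M}|f|^{2\gamma}\right)^{\frac{1}{2\gamma}}\left(\int_{M}|\nabla f|^{2}\right)^{\frac12}
  =2\,\Vert f\Vert_{2\gamma}\Vert\nabla f\Vert_{2}.
\]
On the left-hand side of the Sobolev inequality $\Vert g\Vert_{\gamma}=\Vert f\Vert_{2\gamma}^{2}$, while the mean contributes only $\Vert\bar g\Vert_{\gamma}=\bar g\,\vol(M)^{1/\gamma}=\vol(M)^{-2/m}\Vert f\Vert_{2}^{2}$. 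Combining the three estimates,
\[
  \Vert f\Vert_{2\gamma}^{2}\le 2S\,\Vert f\Vert_{2\gamma}\Vert\nabla f\Vert_{2}+\vol(M)^{-2/m}\Vert f\Vert_{2}^{2},
\]
and Young's inequality $2S\,\Vert f\Vert_{2\gamma}\Vert\nabla f\Vert_{2}\le\tfrac12\Vert f\Vert_{2\gamma}^{2}+2S^{2}\Vert\nabla f\Vert_{2}^{2}$ absorbs the first term on the right into the left, giving $\tfrac12\Vert f\Vert_{2\gamma}^{2}\le 2S^{2}\Vert\nabla f\Vert_{2}^{2}+\vol(M)^{-2/m}\Vert f\Vert_{2}^{2}$. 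This is exactly \eqref{3.7} with $C_{6}=\tfrac{1}{4S^{2}}$, depending only on the Neumann $\tfrac{m}{m-1}$-Sobolev constant, and $C_{7}=2\,\vol(M)^{-2/m}$, depending only on $\vol(M)$.

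For $m=2$ I would run the same scheme for an arbitrary $\gamma>2$, now testing the Neumann $1$-Sobolev (isoperimetric) inequality $\Vert g-\bar g\Vert_{2}\le S_{1}\Vert\nabla g\Vert_{1}$ against $g=|f|^{\gamma}$: the term $\Vert\nabla g\Vert_{1}$ is controlled by Cauchy--Schwarz together with the interpolation of $\Vert f\Vert_{2(\gamma-1)}$ between $\Vert f\Vert_{2}$ and $\Vert f\Vert_{2\gamma}$ (legitimate precisely because $\gamma>2$), the mean is handled similarly, and the top-order term is once more absorbed by Young's inequality, so that $C_{6},C_{7}$ end up depending on $\gamma$ and $\vol(M)$.

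This argument has no serious obstacle; the only genuine work is bookkeeping. The delicate points are: choosing the H\"older/interpolation exponents so that the powers of $\Vert f\Vert_{2\gamma}$, $\Vert\nabla f\Vert_{2}$ and $\Vert f\Vert_{2}$ line up; verifying that the coefficient of $\Vert f\Vert_{2\gamma}^{2}$ that Young's inequality produces on the right is strictly less than $1$ so that the absorption step is legitimate; and the truncation reduction in the first paragraph, which is needed because $f^{2}\in W^{1,\mu}(M)$ presupposes $f\in L^{2\gamma}(M)$.
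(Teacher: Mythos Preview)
Your argument is correct: the H\"older exponents line up as you state, the absorption via Young's inequality is legitimate, and the truncation reduction is a standard and valid way to justify applying the Sobolev inequality to $f^{2}$. The $m=2$ sketch is also sound, with the interpolation of $\Vert f\Vert_{2(\gamma-1)}$ between $\Vert f\Vert_{2}$ and $\Vert f\Vert_{2\gamma}$ requiring exactly $\gamma\ge 2$.

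As for comparison: the paper gives no proof of its own here --- it simply cites Corollary~9.3 of Li's \emph{Geometric Analysis}. What you have written is essentially the standard derivation of the $L^{2}$-Sobolev inequality from the Neumann $\tfrac{m}{m-1}$-Sobolev constant, which is precisely the content of that corollary. So you have supplied, in self-contained form, the argument that the paper outsources to the reference; there is no genuine difference in approach, only in the level of detail presented.
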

\begin{proof}
See Corollary 9.3 in \cite{li2012geometric}.
\end{proof}

\begin{lemma}\label{lm3.3}
     Let $M$ be a compact $m$-dimensional manifold with smooth boundary,  $\l_{i, \a}$ be the $i$th Robin eigenvalue with $\a<0$, and $\phi_{i, \a}$ be the corresponding positive normalized eigenfunction. Let $\gamma$,
     $C_{6}$ and $C_7$ be the  constants from Lemma \ref{lm3.2},  and $C_8=\max_{x\in M}\{|\frac{\nabla \phi_{1,\a}(x)}{\phi_{1, \a}(x)}|\}$  (positive  by Proposition \ref{prop:posi}). Then the $L^\infty$ norm of $\phi_{i, \a}$ satisfies
     \begin{align}\label{3.8}
         ||\phi_{i,\alpha}||_{\infty} \leq \frac{\exp\{ \frac{C_6 C_7}{4C_8^2} \cdot \frac{\gamma^2}{\gamma^2-1} + \frac{1}{2} \frac{\gamma}{(\gamma-1)^2} \log \gamma \}}{(C_6/2)^{\frac{1}{2} \frac{\gamma}{\gamma-1}}}\cdot \frac{\sup_M \phi_{1,\alpha} }{\inf_M \phi_{1,\alpha} } (\lambda_{i,\alpha} - \lambda_{1,\alpha} + 4C_{8}^2)^{\frac{1}{2} \frac{\gamma}{\gamma-1}}.
     \end{align}
\end{lemma}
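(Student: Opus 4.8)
The plan is to use a ground-state substitution: since $\inf_M\phi_{1,\a}>0$ by Proposition~\ref{prop:posi} and eigenfunctions are smooth up to $\p M$, the function $g:=|\phi_{i,\a}|/\phi_{1,\a}$ is nonnegative, bounded and Lipschitz, and $J_p:=\int_M\phi_{1,\a}^2 g^p\,dx$ satisfies $J_2=\int_M\phi_{i,\a}^2\,dx=1$. As in the proof of Lemma~\ref{lm3.1} one has $\Delta|\phi_{i,\a}|\ge-\l_{i,\a}|\phi_{i,\a}|$ distributionally, and $|\phi_{i,\a}|$ obeys the same Robin condition $\p_\nu|\phi_{i,\a}|+\a|\phi_{i,\a}|=0$ as $\phi_{i,\a}$. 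Using this, $-\Delta\phi_{1,\a}=\l_{1,\a}\phi_{1,\a}$, and the identity $\operatorname{div}(\phi_{1,\a}^2\nabla g)=\phi_{1,\a}\Delta|\phi_{i,\a}|-|\phi_{i,\a}|\Delta\phi_{1,\a}$, a short computation shows that for every $0\le\psi\in H^1(M)$
\begin{align*}
\int_M\phi_{1,\a}^2\,\nabla g\cdot\nabla\psi\,dx\ \le\ (\l_{i,\a}-\l_{1,\a})\int_M\phi_{1,\a}^2\,g\,\psi\,dx,
\end{align*}
the boundary contributions of the two Robin conditions cancelling exactly. Thus $g$ is a subsolution of a \emph{Neumann} problem for the weighted operator $f\mapsto-\operatorname{div}(\phi_{1,\a}^2\nabla f)$, with the eigenvalue $\l_{i,\a}$ now replaced by the gap $\l_{i,\a}-\l_{1,\a}\ge0$.

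Next I would carry out a weighted Moser iteration. Testing the displayed inequality with $\psi=g^{k-1}$ ($k\ge2$, legitimate since $g\in H^1\cap L^\infty$) and using $g^{k-2}|\nabla g|^2=\tfrac4{k^2}|\nabla g^{k/2}|^2$ gives $\int_M\phi_{1,\a}^2|\nabla g^{k/2}|^2\le\tfrac{k^2}{4(k-1)}(\l_{i,\a}-\l_{1,\a})\int_M\phi_{1,\a}^2 g^k$. To bring in Lemma~\ref{lm3.2} I replace $\phi_{1,\a}^2|\nabla w|^2$ (with $w=g^{k/2}$) by $|\nabla(\phi_{1,\a}w)|^2$: expanding $|\nabla(\phi_{1,\a}w)|^2$ and using $|\nabla\phi_{1,\a}|\le C_8\phi_{1,\a}$ together with the elementary inequality $2ab\le a^2+b^2$ gives $|\nabla(\phi_{1,\a}w)|^2\le2\phi_{1,\a}^2|\nabla w|^2+2C_8^2\phi_{1,\a}^2 w^2$, hence $\int_M|\nabla(\phi_{1,\a}w)|^2\le\big(\tfrac{k^2}{2(k-1)}(\l_{i,\a}-\l_{1,\a})+2C_8^2\big)\int_M\phi_{1,\a}^2 w^2$. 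Applying Lemma~\ref{lm3.2} to $\phi_{1,\a}w$ and using $(\phi_{1,\a}w)^{2\gamma}\ge(\inf_M\phi_{1,\a})^{2(\gamma-1)}\phi_{1,\a}^2 w^{2\gamma}$ (valid as $\gamma>1$) and $(\phi_{1,\a}w)^2=\phi_{1,\a}^2 w^2$, I arrive at
\begin{align*}
J_{k\gamma}^{1/\gamma}\ \le\ \frac1{(\inf_M\phi_{1,\a})^{2(\gamma-1)/\gamma}}\lf(\frac1{C_6}\Bigl(\frac{k^2}{2(k-1)}(\l_{i,\a}-\l_{1,\a})+2C_8^2\Bigr)+C_7\ri)J_k\qquad(k\ge2).
\end{align*}

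Then I would iterate this along $k=p_l:=2\gamma^l$, $l=0,1,2,\dots$, starting from $J_2=1$: raising the $l$-th inequality to the power $1/p_l$ and letting $l\to\infty$, one has $J_{p_l}^{1/p_l}\to\Vert g\Vert_\infty$ (the $L^\infty$ norms for $\phi_{1,\a}^2\,dx$ and $dx$ coincide since $\phi_{1,\a}^2$ is bounded above and below), while the powers of $\inf_M\phi_{1,\a}$ combine to $(\inf_M\phi_{1,\a})^{-1}$ because $\tfrac{2(\gamma-1)}\gamma\sum_{l\ge0}p_l^{-1}=1$. This yields
\begin{align*}
\Vert g\Vert_\infty\ \le\ \frac1{\inf_M\phi_{1,\a}}\prod_{l=0}^\infty\lf(\frac1{C_6}\Bigl(\frac{p_l^2}{2(p_l-1)}(\l_{i,\a}-\l_{1,\a})+2C_8^2\Bigr)+C_7\ri)^{1/p_l}.
\end{align*}
Bounding $\tfrac{p_l^2}{2(p_l-1)}\le p_l$ and $\tfrac{2C_8^2}{C_6}\le\tfrac{p_l C_8^2}{C_6}$ (as $p_l\ge2$) makes the $l$-th factor $\le\tfrac{p_l}{C_6}(\l_{i,\a}-\l_{1,\a}+4C_8^2)+C_7$; factoring out $\tfrac{p_l}{C_6}(\l_{i,\a}-\l_{1,\a}+4C_8^2)$, using $1+x\le e^x$ and $\l_{i,\a}-\l_{1,\a}+4C_8^2\ge4C_8^2$, the product reduces to the elementary identities $\sum_l p_l^{-1}=\tfrac12\tfrac\gamma{\gamma-1}$, $\sum_l l\,p_l^{-1}=\tfrac12\tfrac\gamma{(\gamma-1)^2}$, $\sum_l p_l^{-2}=\tfrac14\tfrac{\gamma^2}{\gamma^2-1}$. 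Since $\Vert\phi_{i,\a}\Vert_\infty\le(\sup_M\phi_{1,\a})\Vert g\Vert_\infty$, this gives the factor $\sup_M\phi_{1,\a}/\inf_M\phi_{1,\a}$ and establishes \eqref{3.8}.

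The main obstacle is the first step. When $\a<0$, the boundary term $\a\int_{\p M}|f^{k/2}|^2\,dS$ in the analogue of \eqref{3.3} has the wrong sign, so the direct iteration of Lemma~\ref{lm3.1} fails; replacing $|\phi_{i,\a}|$ by $g=|\phi_{i,\a}|/\phi_{1,\a}$ is the device that removes this term, the Robin conditions of $\phi_{i,\a}$ and $\phi_{1,\a}$ being compatible. The price paid---the weight $\phi_{1,\a}^2$ in the Dirichlet form (controlled by $C_8$), the constant $\sup_M\phi_{1,\a}/\inf_M\phi_{1,\a}$, and the shift from $\l_{i,\a}$ to $\l_{i,\a}-\l_{1,\a}+4C_8^2$---is exactly what appears in \eqref{3.8}. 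Once this substitution is in place, the remainder is the bookkeeping of a Moser iteration identical in spirit to Lemma~\ref{lm3.1}.
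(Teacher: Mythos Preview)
Your proof is correct and rests on the same key device as the paper: the ground-state substitution $g=|\phi_{i,\a}|/\phi_{1,\a}$, which converts the Robin problem into one with homogeneous Neumann data so that the troublesome boundary term disappears, followed by a Moser iteration based on Lemma~\ref{lm3.2}. The organizational difference lies in how $\phi_{1,\a}$ is carried through the iteration. The paper derives the \emph{unweighted} drift inequality $\Delta u\ge-2C_8|\nabla u|-(\l_{i,\a}-\l_{1,\a})u$ for $u=|w_i|$ (via Kato's inequality applied to $\Delta w_i+2\langle\nabla\log\phi_{1,\a},\nabla w_i\rangle+(\l_{i,\a}-\l_{1,\a})w_i=0$), absorbs the first-order term by Young's inequality, and applies Lemma~\ref{lm3.2} directly to $u^{k/2}$. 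You instead keep the \emph{weighted} Dirichlet form $\int_M\phi_{1,\a}^2|\nabla g|^2$, apply Lemma~\ref{lm3.2} to $\phi_{1,\a}g^{k/2}$, and peel off the weight using $|\nabla\phi_{1,\a}|\le C_8\phi_{1,\a}$ together with $\phi_{1,\a}^{2\gamma}\ge(\inf_M\phi_{1,\a})^{2(\gamma-1)}\phi_{1,\a}^2$. Both routes land on the same ingredients---the shift $4C_8^2$, the ratio $\sup_M\phi_{1,\a}/\inf_M\phi_{1,\a}$, and the geometric sums $\sum p_l^{-1}$, $\sum l\,p_l^{-1}$, $\sum p_l^{-2}$---and yield \eqref{3.8}. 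Your weighted formulation makes the self-adjoint structure of the transformed operator explicit, while the paper's drift version stays closer in form to the iteration of Lemma~\ref{lm3.1}.
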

\begin{proof}
    Denote by $\phi_{i,\alpha}(x)$ the normalized eigenfunctions with Robin eigenvalue $\l_{i, \a}$, and 
let
    \begin{align*}
        w_i(x) = \frac{\phi_{i,\alpha}(x) }{\phi_{1,\alpha}(x)}.
    \end{align*}
    It can be easily checked that
    \begin{align}{\label{3.9}}
        \begin{cases}
            \Delta w_i(x) + 2 \langle \nabla \log \phi_{1,\alpha}(x), \nabla w_i(x) \rangle + (\lambda_{i,\alpha} - \lambda_{1,\alpha}) w_i(x) = 0, \quad x \in M, \\
            \p_\nu w_i(x) = 0 , \quad x \in \p M.
        \end{cases}
    \end{align}
     Let $u(x) = |w_i(x)|$, and using \eqref{3.9}  we estimate that 
 \begin{align*}
            \Delta u (x) &= \Delta |w_i(x)|  \geq - |\Delta w_i(x)| \\
            & = - |2 \langle \nabla \log \phi_{1,\alpha}(x), \nabla w_i(x) \rangle + (\lambda_{i,\alpha} - \lambda_{1,\alpha}) w_i(x)| \\
            & \geq -2 |\nabla \log \phi_{1,\alpha}(x)| |\nabla u| - (\lambda_{i,\alpha} - \lambda_{1,\alpha}) u\\
            &\geq -2 C_{8} |\nabla u(x) | - (\lambda_{i,\alpha} - \lambda_{1,\alpha}) u(x),
\end{align*}
    where we used 
     Kato's inequality  in the first inequality. 
 For $k\ge 2$,   multiplying $u(x)^k$ and integrating  over $M$ yields
    \begin{align*}
        \int_M u^{k-1} \Delta u dx \geq -2 C_{8} \int_M u^{k-1}|\nabla u|dx  - (\lambda_{i,\alpha}-\lambda_{1,\alpha}) \int_M u^k dx,
    \end{align*}
and integration by parts gives 
    \begin{align*}
        (k-1) \int_M u^{k-2}|\nabla u|^2 dx \leq 2C_{8} \int_M u(x)^{k-1}|\nabla u(x)| dx + (\lambda_{i,\alpha}-\lambda_{1,\alpha}) \int_M u^k dx.
    \end{align*}
    Observing that
    \begin{align*}
        2 u^{k-1} |\nabla u| \leq \frac{1}{2C_{8}} u^{k-2} |\nabla u|^2 + 2C_{8} |u|^k,
    \end{align*}
    we have
    \begin{equation}{\label{3.10}}
        (k-\frac{3}{2}) \int_M u^{k-2}|\nabla u|^2 dx \leq (\lambda_{i,\alpha} - \lambda_{1,\alpha} + 4C_{8}^2) \int_{M} u^k dx.
    \end{equation}
Using Sobolev inequality \eqref{3.7}, we obtain 
    \begin{align*}
        \int_M u^{k-2} |\nabla u|^2 dx = \frac{4}{k^2}        
        \int_M |\nabla (u^{k/2})|^2 dx 
        \geq \frac{4C_6}{k^2}  \Big((\int_M |u|^{k\gamma} dx)^{1/\gamma} - C_7 \int_M |u|^k dx \Big),
    \end{align*}
    where $C_6$ is defined Lemma \ref{lm3.2}.
Plugging  above inequality into \eqref{3.10}, we have
    \begin{align*}
        \left( \int_M |u|^{k\gamma} \right)^{1/\gamma} &\leq \Big(\frac{k^2}{4(k-3/2) C_{6}}(\lambda_{i,\alpha} -\lambda_{1,\alpha} + 4C_{8}^2) + C_7\Big ) \int_M |u|^k dx \\
        & \leq \left(C_7+ \frac{1}{C_{6}}(\lambda_{i,\alpha} - \lambda_{1,\alpha} + 4C_{8}^2)k \right) \int_M |u|^k dx,
    \end{align*}
    where we used $k\ge 2$ in the last inequality.
     Hence we conclude
    \begin{align}\label{3.11}
        ||u||_{\gamma k} \leq  \left(C_7+ \frac{1}{C_{6}}(\lambda_{i,\alpha} - \lambda_{1,\alpha} + 4C_{8}^2)k \right)^{1/k} ||u||_k
    \end{align}
    for $k\ge 2$. Let $a= \frac{\lambda_{i,\alpha} - \lambda_{1,\alpha} + C_{8}^2}{C_{6}}$, \eqref{3.11} becomes to
    \begin{align}\label{3.12}
        ||u||_{\gamma k} \leq (C_7 + ak)^{1/k} ||u||_k.
    \end{align}
  Choosing  $k = 2 \gamma ^{j-1}$ for $j=1,2,\cdots$, we obtain from \eqref{3.12} that
    \begin{align*}
        ||u||_{2\gamma^j} = (C_7 + 2a \gamma^{j-1} )^{\frac{1}{2\gamma^{j-1}}} ||u||_{2\gamma^{j-1}},
    \end{align*}
    which implies that
    \begin{align}\label{3.13}
    \begin{split}
        ||u||_{\infty} \leq & \prod_{j=1}^{+\infty} (C_7 + 2a\gamma^{j-1} )^{\frac{1}{2\gamma^{j-1}}} ||u||_2 \\
        \leq & \exp\{ \frac{C_6C_7}{4C_8^2} \cdot \frac{\gamma^2}{\gamma^2-1} + \frac{1}{2} \frac{\gamma}{(\gamma-1)^2} \log \gamma \} (2a)^{\frac{1}{2} \frac{\gamma}{\gamma-1}} / \inf_M \phi_{1,\alpha},\\
        =:&  \frac{C_9}{\inf_M \phi_{1,\alpha} } (\lambda_{i,\alpha} - \lambda_{1,\alpha} + 4C_{8}^2)^{\frac{1}{2} \frac{\gamma}{\gamma-1}},
    \end{split}
    \end{align}
    where 
   \begin{align}\label{3.14}
   C_9:= \frac{\exp\{ \frac{C_6C_7}{4C_8^2} \cdot \frac{\gamma^2}{\gamma^2-1} + \frac{1}{2} \frac{\gamma}{(\gamma-1)^2} \log \gamma \}}{(C_6/2)^{\frac{1}{2} \frac{\gamma}{\gamma-1}}}
\end{align}
  Therefore using the definition of $u$ and \eqref{3.13} we conclude that
    \begin{align*}
   ||\phi_{i,\alpha}||_{\infty} \leq ||u||_{\infty} ||\phi_{1,\alpha}||_{\infty} \leq C_9 \cdot \frac{\sup_M \phi_{1,\alpha} }{\inf_M \phi_{1,\alpha} } (\lambda_{i,\alpha} - \lambda_{1,\alpha} + 4C_{8}^2)^{\frac{1}{2} \frac{\gamma}{\gamma-1}},
    \end{align*}
proving \eqref{3.8}.
\end{proof}

\begin{proof}[Proof of the case for $\a<0$]
We only prove the case for $m\ge 3$. In which case, we have
    \begin{align*}
          ||\phi_{i,\alpha}||_{\infty} \leq C_9 \cdot \frac{\sup_M \phi_{1,\alpha} }{\inf_M \phi_{1,\alpha} } (\lambda_{i,\alpha} - \lambda_{1,\alpha} + 4C_{8}^2)^{m/4},,
    \end{align*}
    by Lemma \ref{lm3.1}.
Let  $$h(t):=\sqrt{\frac{m^m}{e^m}}\frac{e^{2C_8^2t}}{t^{m/2}},$$
where $C_8$ is the constant defined as in Lemma \ref{lm3.3}. Then direct calculation gives
    \begin{equation}\label{3.15}
        e^{-xt} (x +4C_{8}^2)^{\frac{m}{2} } \leq h(t) e^{-\frac{xt}{2} }
    \end{equation}
for $x>0$ and $t>0$. Using \eqref{3.8} and \eqref{3.15} we estimate that     
\begin{align*}
    \begin{split}
        |e^{- \lambda_{i,\alpha} t } \phi_{i,\alpha}(x) \phi_{i,\alpha}(y)| &\leq e^{- \lambda_{i,\alpha} t } ||\phi_{i,\alpha}||^2_{\infty} \\
        & \leq (C_9\frac{\sup_M \phi_{1,\alpha} }{\inf_M \phi_{1,\alpha} })^2e^{- \lambda_{1,\alpha} t} e^{- (\lambda_{i,\alpha} - \lambda_{1,\alpha}) t} (\lambda_{i,\alpha} - \lambda_{1,\alpha} + 4C_{8})^{\frac{m}{2}}\\
        & \leq (C_9\frac{\sup_M \phi_{1,\alpha} }{\inf_M \phi_{1,\alpha} })^2h(t) e^{-\lambda_{1,\alpha}t} e^{-\frac{\lambda_{i,\alpha} - \lambda_{1,\alpha}}{2}t} \\
        & \leq (C_9\frac{\sup_M \phi_{1,\alpha} }{\inf_M \phi_{1,\alpha} })^2h(t) e^{-\frac{\lambda_{1,\alpha} }2t} e^{-C_2 i^{\frac{1}{m-1}} t+C_3 t/2},
    \end{split}
    \end{align*}
    where we used \eqref{2.7} in the last inequality,  $C_2$ and $C_3$ are positive constants defined in Lemma \ref{thm:high},  $C_6$ is the constant defined in Lemma \ref{lm3.3}, and $C_9$ is defined in \eqref{3.14}. Hence
    \begin{align*}
         \Ha(x,y,t) = \sum_{i=1}^{\infty} e^{ - \lambda_{i,\alpha} t } \phi_{i,\alpha}(x) \phi_{i,\alpha}(y)
    \end{align*}
    converges uniformly in $M \times M \times [\varepsilon, \infty)$ for any $\varepsilon>0$. 
    Observing that 
    \begin{align*}
        \int_{M} \langle \nabla \phi_{i,\alpha} , \nabla \phi_{j,\alpha} \rangle + \alpha \int_{\p M} \phi_{i,\alpha} \phi_{j,\alpha} = \delta_{ij} \lambda_{i,\alpha},
    \end{align*}
    we get
    \begin{equation}{\label{3.16}}
    \begin{split}
        &\int_{M} |\sum_{i=1}^k e^{- \lambda_{i,\alpha} t} \phi_{i,\alpha}(x) \nabla \phi_{i,\alpha}(y) |^2 + \alpha \int_{\p M} |\sum_{i=1}^k e^{- \lambda_{i,\alpha} t} \phi_{i,\alpha}(x) \phi_{i,\alpha}(y) |^2 \\
         = &\sum_{i=1}^k e^{-2 \lambda_{i,\alpha} t } \lambda_{i,\alpha} \phi_{i,\alpha}(x) \phi_{i,\alpha}(x),
        \end{split}
    \end{equation}
    which is uniformly bounded for any $k>0$.
    
   The remainder of the proof mirrors the 
$\a>0$ case, with the truncated sums satisfying \eqref{3.16} and their limit inheriting the solution properties.
\end{proof}

\section*{Acknowledgments}
The first  author would like to express his sincere gratitude to his supervisor, Professor Bobo Hua, for lots of encouragement and helpful suggestions. 
The authors would like to thank Professor Hongjie Dong  for helpful discussion on the maximum principle for the Robin heat equation in Section \ref{sect 2.2}, and
Professor Genggeng Huang for helpful discussion regarding the trace Sobolev inequalities in Section \ref{sect 2.3}, and Professor Michiel  van den Berg for bringing \cite{vB14} and other relevant works on Robin boundary problems to our attention. The authors are also thankful to Professors Xianzhe Dai, Xiaolong Li, and Qi S. Zhang for their interest in this work and their valuable feedback.
The research of this paper is partially supported by 
 NSF of Jiangsu Province Grant No. BK20231309.
 
\bibliographystyle{plain}
\bibliography{ref}
\end{document}